\documentclass[11pt]{article}

\usepackage{amsmath}
\usepackage{amssymb}
\usepackage{epsf}
\usepackage{psrotate}
\usepackage{dgstpp}
\usepackage{pproof}
\usepackage{remexpp}
\usepackage{mathrsfs}
\let\rscr=\mathscr
\let\mathscr=\relax
\let\mcal=\mathcal
\usepackage{eucal}
\let\escr=\mathcal
\let\mathcal=\relax
\usepackage{accents}

\arraycolsep .2em

\newtheorem{theorem}{Theorem}[section]
\newtheorem{proposition}[theorem]{Proposition}
\newtheorem{lemma}[theorem]{Lemma}
\newtheorem{corollary}[theorem]{Corollary}
\newremark{definition}[theorem]{Definition}
\newremark{example}[theorem]{Example}
\newremark{remark}[theorem]{Remark}

\newcommand{\av}{a_V}
\newcommand{\bdel}[1]{{\pbar\nabla}_{\!#1}}
\newcommand{\bU}{\bar{U}}
\newcommand{\bV}{\bar{V}}
\newcommand{\cD}{\mcal{D}}
\newcommand{\cJ}{\escr{J}}

\newcommand{\vl}{{\scriptscriptstyle\mathrm{V}}}
\newcommand{\cT}{\mcal{T}}
\newcommand{\dc}{\dot{c}}
\newcommand{\ddc}{\ddot{c}}
\newcommand{\dx}{\dot{x}}

\newcommand{\ddx}{\ddot{x}}

\newcommand{\del}[1]{\nabla_{\!#1}}
\newcommand{\ds}{\oplus} 
\newcommand{\eG}{\escr{G}}
\newcommand{\D}{\nabla}

\newcommand{\fj}{\text{fix}\mspace{1mu}J}
\newcommand{\g}{\gamma}
\newcommand{\G}{\Gamma}
\newcommand{\gl}{\mathfrak{gl}}
\newcommand{\half}{\mbox{$\textstyle\frac{1}{2}$}}
\newcommand{\iso}{\cong}
\newcommand{\J}{\escr{J}}
\newcommand{\K}{\escr{K}}

\newcommand{\lsp}{[\kern-0.15em[} 
\newcommand{\nd}{\vdash}
\newcommand{\p}{\pi}
\newcommand{\pbar}[1]{\begingroup\let\hidewidth\relax\accentset{\hrulefill}{#1}\endgroup}
\newcommand{\rsp}{]\kern-0.15em]} 
\newcommand{\surj}{\rightarrow\kern-.82em\rightarrow}

\newcommand{\V}{{\rscr V}}
\newcommand{\ve}{\varepsilon}

\newcommand{\R}{\mathbb{R}}

\renewcommand{\H}{\rscr{H}}
\renewcommand{\l}{\lambda}
\renewcommand{\P}{\Phi}
\renewcommand{\S}{\lower .2ex\hbox{$\escr S$}}

\makeatletter

\@addtoreset{equation}{section}

\newcommand{\Ad}[1]{\mathop{\operator@font Ad}\nolimits_{#1}}
\newcommand{\Aut}{\mathop{\operator@font Aut}\nolimits}
\newcommand{\con}{\mathop{\operator@font con}\nolimits}
\newcommand{\diag}{\mathop{\operator@font diag}\nolimits}
\newcommand{\dom}{\mathop{\operator@font dom}\nolimits}
\newcommand{\econ}{\mathop{\operator@font EConn}\nolimits}
\newcommand{\End}{\mathop{\operator@font End}\nolimits}
\newcommand{\Hom}{\mathop{\operator@font Hom}\nolimits}
\newcommand{\im}{\mathop{\operator@font im}\nolimits}
\newcommand{\op}{\mathop{\operator@font Op_2}\nolimits}
\newcommand{\opa}{\mathop{\operator@font Op_2^{alt}}\nolimits}
\newcommand{\Ric}{\mathop{\operator@font Ric}\nolimits}
\newcommand{\qsp}{\mathop{\operator@font QSpray}\nolimits}
\newcommand{\sode}{\mathop{\operator@font DE_2}\nolimits}
\newcommand{\tr}{\mathop{\operator@font tr}\nolimits}
\makeatother

\hyphenation{ge-o-des-ic quasi-spray}

\font\heads = cmbx12

\preprint{DRP5}
\title{General Connections, Exponential Maps,\\
   and Second-order Differential Equations}
\author{L. Del Riego}
\address{Facultad de Ciencias\\
   Zona Universitaria\\
   Universidad Aut\'onoma de San Luis Potos\'\i\\
   San Luis Potos\'\i, SLP\\
   78290 MEXICO\\
   lilia@fc.uaslp.mx}
\author{Phillip E. Parker}
\address{Mathematics Department\\
   Wichita State University\\
   Wichita KS 67260-0033\\
   USA\\
   phil@math.wichita.edu}
\date{6 July 2011}
\abstract{
The main purpose of this article is to introduce a comprehensive, unified
theory of the geometry of all connections.  We show that one can study a
connection \emph{via} a certain, closely associated second-order
differential equation, its geodesic quasispray.  One of the most important
results is our extended Ambrose-Palais-Singer correspondence.  We extend
the theory of geodesic sprays to the quasisprays, show that locally
diffeomorphic exponential maps can be defined for any SODE, and give a
full theory of (possibly nonlinear) covariant derivatives for (possibly
nonlinear) connections.  In the process, we introduce \emph{vertically
homogeneous} connections.  Unlike homogeneous connections, these complete
our theory and allow us to include Finsler spaces in a completely
consistent manner.

This is an expanded version of the article published in {\it Differ.\
Geom.\ Dyn.\ Syst.}\ {\bf 13} (2011) 72--90.  Included are the proof
published in {\it Nonlinear Anal.}\ {\bf 63} (2005) e501--e510 and some
new material on homogeneity.}

\msc{53C05}{53C15, 53C22}

\begin{document}

\maketitle

\setcounter{page}{0}\thispagestyle{empty}\strut\vfill\eject

\section{\heads Introduction}
In modern geometry, there are various kinds of connections for a given
manifold $M$ with a bundle structure over it.  For example:
\begin{itemize}

\item A \textit{Cartan} connection may be considered as a version of the
general concept of a principal connection, in which the geometry of the
principal bundle is tied to the geometry of the base manifold \cite{C,S}.
Cartan connections describe the geometry of manifolds modelled on
homogeneous spaces. Under certain technical conditions, they can be related
to the remaining types \cite{S}.

\item A \textit{general} connection on any fibre bundle $E\surj
M$ is a splitting of $TE$ into the natural vertical bundle and a
\textit{horizontal} bundle \cite{E}.  If the splitting is equivariant for
the structure group (or, more generally, some subgroup) $G$, then it defines
an Ehresmann \textit{$G$-connection} \cite{E,P}.

\item A \textit{principal} connection is an Ehresmann $G$-connection on a
principal $G$-bundle $(P,M,G)$ \cite{E,P}.

\item A \textit{linear} connection on a vector bundle $(E,M,V,GL(V))$ over
$M$ with model fiber $V$ is associated to a principal connection on the
frame bundle with group $GL(V)$ \cite{E,P}.  All others are
\textit{nonlinear}, among which are the \textit{affine} connections with $G
= A_n$.  It is unfortunate that in the extant literature on nonlinear
connections, for example \cite{Kw,Bar,V1,V2,D} all written well after
\cite{E}, a nonlinear connection is defined to be a particular highly
restricted type of connection on $TM -0$.

\item A \textit{Koszul} connection is a linear operator of the type of a
covariant derivative on a vector bundle. It gives rise to a linear
connection on the vector bundle \cite{P}.

\end{itemize}
We are only concerned with finite-dimensional real vector bundles $E$
(vector spaces $V$), so $GL(V) \cong GL(n,\R) = GL_n$ with $n = \dim V$.
Moreover, our only direct concern is when $E = TM$, so the principal bundle
is $LM$, the bundle of linear frames, $n = \dim M$, and the connections are
$G$-connections for a suitable subgroup $G\le GL_n$.  All
\textit{pseudoRiemannian} connections are linear connections of this last
type \cite{O,P}.

Since the fundamental work of Ehresmann \cite{E}, we have had a consistent
terminology for connections on a manifold $M$.  A connection on $M$ is a
splitting $TTM =\V\ds\H$ where $\V$ is the natural vertical bundle and $\H$
is a complementary subbundle, the horizontal bundle.  In this article, we
continue our study of smooth general connections on the tangent bundle
$TM$ of a smooth, paracompact, connected manifold $M$.  We shall use
``nonlinear'' in the original sense of Ehresmann.

Let us note that Bucataru and Miron \cite{BM} recently defined a completely
different kind ofnonlinear connection \textit{via} a generalization of the
Koszul procedure. They begin with the {\em assumption\/} that
parallel transport is to be linear, construct from that a nonlinear
covariant derivative operator, and thence a nonlinear connection. We do not
begin with that, or any other such, assumption; instead, we begin with an
arbitrary (smooth) nonlinear connection, and then construct a
nonlinear covariant derivative operator {\em via\/} an extension of the
connector procedure (Def.\,\ref{kap}).

The geodesic spray in pseudoRiemannian geometry, the integral curves of
which are the geodesics of the Levi-Civita connection, has played an
important role; see, for example, \cite{BC,B}.  Riemannian geometry has
been a main thread of mathematics over the last century \cite{Ber}, and
Finsler geometry has recently undergone somewhat of a revival \cite{A}.

Second-order differential equations (SODEs) are an important class of
vector fields on the tangent bundle.  Our principal motivation for this
work was the desire to make a comprehensive theory of the geometry of
nonlinear connections and SODEs which would include (pseudo)Riemannian
geodesic sprays and analogues for Finsler-like spaces as examples.
Moreover, such a theory would also apply to the geometry of principal
symbols of PDOs \cite{P8} and to stability problems around linear
connections; {\em e.g.,} \cite{BP4,BP6}.

Section \ref{rev} contains our notation, conventions, and a summary of our
earlier article \cite{DRP1}.  In Section \ref{exp} we present the new
exponential maps defined by SODEs.  Section \ref{cs} describes the
relations among (possibly nonlinear) connections, certain SODEs
(quasisprays), the associated (possibly nonlinear) covariant
derivatives, and geodesics.  It also contains the various parts of our
extended Ambrose-Palais-Singer (APS) correspondence.  In Section \ref{fs}
we provide a simple example using Finsler spaces.  Finally, Section
\ref{gcs} begins with the extension of the main results of \cite{BP6} to
SODEs, using our new, extended construction of exponential maps.  It also
includes the extension of the main stability result of \cite{BP4,DRP1} to
all SODEs.

The authors thank CONACYT and FAI for travel and support grants, Wichita
State University and Universidad Aut\'onoma de San Luis Potos\'{\i} for
hospitality during the progress of this work, and J. Hebda and A. Helfer
for helpful conversations.  Del Riego also thanks M. Mezzino for writing a
Mathematica package for her use.

\section{\heads Review and definitions}\label{rev}
A second-order differential equation (SODE) on a manifold $M$ is defined
as a projectable section of the second-order tangent bundle $TTM \surj TM$
\cite{BC,B,BJ}.  Recall that an integral curve of a vector field on $TM$
is the canonical lift of its projection if and only if the vector field is
projectable \cite{BC}.  For a curve $c$ in $M$ with tangent vector field
$\dot c$, this $\dot c$ is the canonical lift of $c$ to $TM$ and $\ddot c$
is the canonical lift of $\dot c$ to $TTM$.  Then each projectable vector
field $S$ on $TM$ determines a second-order differential equation on $M$
by $\ddot c = S\circ \dot c$, and each such curve with $\dot c(s_{0}) =
v_{0}\in T_{c(s_{0})}M$ is a solution with initial condition $v_0$.
Solutions are preserved under translations of parameter, they exist for
all initial conditions by the Cauchy theorem, and, as our manifolds are
assumed to be Hausdorff, each solution will be unique provided we take it
to have maximal domain; {\em i.e.,} to be inextendible \cite{BC,DRD,HS}.

There are two vector bundle structures on $TTM$ over $TM$, denoted
here by $\pi_T$ and $\pi_*$.  Let $J$ be the canonical involution on
$TTM$, so it isomorphically exchanges the two vector bundle structures on
$TTM$.  We denote the fixed set of $J$ by $\fj$ and observe that it is
an {\em affine\/} subbundle of both $\pi_T$ and $\pi_*$, but not a {\em
vector\/} subbundle of either.
\begin{definition}\label{dfs}
A section $S$ of $TTM$ over $TM$ is a SODE when $JS = S$, or equivalently,
when $S\in\G(\fj)$. The space of all SODEs is denoted by $\sode(M)$,
and those vanishing on the 0-section of $TM$ by $\qsp(M)$.
\end{definition}
Thus a SODE can be expressed locally as $S:(x,y)\mapsto (x,y,y,\S(x,y))$.
\begin{remark}
If desired, one may work with jet spaces using $J^1(\R_{\,0},M)\cong TM$
and $J^2(\R_{\,0},M) \cong \fj$, where the notation indicates jets with
fixed source $0\in\R$ and target any point in $M$.
\end{remark}

The vertical bundle $\V = \ker(\pi_* :TTM\surj TM)$ is a vector subbundle
with respect to both vector bundle structures on $TTM$.  In induced local
coordinates, elements of $\V$ look like $(x,y,0,Y)$.  We observe that
$\fj$ is an affine subbundle of $TTM$ with translation vector bundle $\V$.
This allows us to regard $\sode(M)$ as an affine space with translation
vector space $\G(\V)$ and with $\qsp(M)$ as a closed affine subspace, so
that both are affine nuclear Fr\'echet spaces \cite{T}.

Before commenting further on this definition, we must briefly digress to
consider the notion of homogeneity for functions.

Consider the equation $f(ax) = a^m f(x)$.  In projective geometry, for
example, one usually requires this to hold only for $a\ne 0$.  We shall
call this {\em projectively homogeneous\/} of degree $m$.  In other areas,
such as Euler's Theorem in analysis, one further restricts to $a>0$.  We
shall call this {\em positively homogeneous\/} of degree $m$.  Finally, in
order that homogeneity of degree 1 coincide with linearity, one must allow
all scalars $a\in\R$ (including zero).  We shall call this {\em completely
homogeneous\/} of degree $m$. By $h(m)$ we shall mean complete homogeneity
on $TM$ and projective homogeneity on $TM-0$.

The difference between projective homogeneity and complete homogeneity is
minor; essentially, it is just the difference between working on $TM-0$
and on $TM$.  The difference between positive homogeneity and the other
two is more significant.  For example, the inward-going and outward-going
radial geodesics of the Finsler-Poincar\'e plane in \cite{BCS} have
different arclengths.

We must distinguish carefully between parametrized \emph{curves} and
unparametrized \emph{paths}.  A \emph{path} is the image of a parametrized
curve.  Alternatively, one may identify paths with equivalence classes
of curves:  two curves are equivalent if and only if they are
reparametrizations of each other.  This is clearly a bijective
correspondence, as each equivalence class determines a unique path (the
common image of all curves in the class) and conversely.

Recall that there are natural vector bundle maps $\K:\V\to TM$, respecting
$\pi_T$, and $\J:\left(\pi^*TM \iso TM\ds TM\right)\to \V$ which are
isomorphisms on fibers.  Both are versions of canonical parallel
translation on a vector space.  Let $S$ be a SODE over $M$, $p$ a point in
$M$, and consider the value $S(0)$ for 0 in $T_p M$, a vertical vector in
$T_0 T_p M$. Define a vertical vector field by
\begin{equation}
R(u) = \J_u\K(S(0))
\label{vcf}
\end{equation}
for each $u\in T_p M$ and for each $p \in M$.  Note that $R$ is
\emph{vertically constant} as it is constant along the fibers of $TM$ in
the obvious sense.  Clearly, $Q = S-R$ is a quasispray.  Moreover, $R$ is
the vertical lift $U^\vl$ of a vector field $U$ on $M$ as is immediate
from the definition \cite[p.\,6f\,]{YI}. We may think of $R$ or $U$ as an
external force, such as a wind.

We use $\g_u$ to denote the unique inextendible $S$-geodesic with initial
velocity $u\in T_p M$, as in \cite{O}.  Now we are ready to consider
homogeneity for SODEs. Noting that any reasonable notion of homogeneity
will force $S$ to be a quasispray and taking into account the
decomposition just established, we may as well consider only quasisprays.

\begin{definition}\label{nhq}
A quasispray $Q$ is \emph{homogeneous} if and only if for each $0\neq u\in
TM$ and all scalars $a\neq 0$, all of the curves $\g_{au}$ determine the
same unique path in $M$.
\end{definition}

Associated with each quasispray is its system of nondegenerate integral
curves.  A homogeneous qspray gives rise to a system of paths in the sense
of Douglas \cite{jD}, who showed that any such system can be obtained as
the paths of the integral curves of a SODE that is $h(2)$.  (Note that of
all possible $h(m)$, only $h(2)$ is invariantly well-defined globally on
$TM$.)  We extend our definition of homogenity to systems of paths in the
obvious way, so that a system of paths in the sense of Douglas becomes a
homogeneous system of paths in our sense.

We are interested primarily in general connections and their derived
quasisprays. Thus we are interested in systems of (parametrized) curves so
as to include those that arise from inhomogeneous qsprays. It follows that
any homogeneous system of paths (a system in the sense of Douglas) is a
system of paths in our sense but not conversely; we include inhomogeneous
systems while Douglas excluded them.

The following condition is sufficient, but not necessary, for homogeneity
as just defined.  Denote scalar multiplication in the vertical bundle $\V$
by $a_V$.
\begin{definition}\label{hms}
We say that a SODE $S$ is $h(m)$ when $S(av) =
a_*\av^{m-1}S(v)$.
\end{definition}
Explicitly, $a_* \av^{m-1} (x,y,X,Y) = (x,ay,aX,a^m Y)$ in induced local
coordinates.  In other words, the functions $\S(x,y)$ are completely
(respectively, projectively) homogeneous of degree $m$ in the vertical
component in \emph{some} induced local coordinates:  $\S(x,ay) =
a^{m}\S(x,y)$ for some $m\ge 1$ (respectively, $m<1$) and all scalars
$a\in\R$ (respectively, $a\ne 0$).  Note that $h(m)$ SODEs on $TM$
vanish on the 0-section, so are quasisprays.

The break comes at $m=1$ because an $h(m)$ SODE is to be associated with a
connection whose homogeneity formula effectively contains $a^{m-1}$; see
Proposition \ref{hgs}.  In \emph{some} induced local coordinates,
$S:(x,ay)\mapsto (x,ay,ay,a^m \S(x,y))$.

\begin{remark}
Let $C$ denote the Euler-Liouville vector field on $TTM$.  We recall that
in local coordinates, $J(x,y,X,Y) = (x,X,y,Y)$ and $C:(x,y)\mapsto
(x,y,0,y)$.  In the extant literature \cite{DRP1,G,G1,KV,LR}, one finds
homogeneous {\em vector fields\/} of degree $m$ defined by $[C,S] =
(m-1)S$.  In any local coordinates, $S:(x,ay)\mapsto (x,ay,a^{m-1}y,a^m
\S(x,y))$.  It follows that a homogeneous SODE in our theory can be a
homogeneous vector field only for $m=2$.
\end{remark}
Hereinafter we shall call $h(2)$ SODEs {\em quadratic\/} sprays, in
agreement with \cite{G1,KV,LR}.  (Note that {\em complete\/} homogeneity
is required for our quadratic sprays to coincide with the usual spray of
\cite{APS}.)  We denote the set of SODEs on $M$ that are $h(m)$ by
${\qsp}_m(M)$.  It has been usual to consider only (positive) integral
degrees of homogeneity, but we make no such restriction.

Elsewhere \cite{KV}, projectable vector fields on $TM-0$ are called {\em
semi\-sprays\/} and the name {\em sprays\/} (confusingly) used for those
that are $h(2)$ on $TM-0$.  We will associate a SODE to each (possibly
nonlinear) connection in the role of a {\em geodesic spray\/} (see
Theorems \ref{cis} and \ref{cg=sg}), so we shall use the name
``quasispray" to reflect this new, extended role (and to distinguish ours
from all the others; {\em e.g.,} \cite{R}).  We do, however, explicitly
consider only smooth SODEs defined on the entire tangent bundle $TM$;
others \cite{A,BCS,KV} use only the reduced tangent bundle with the
0-section removed, which is necessary when considering $h(m)$ SODEs when
$m<1$ (including $m<0$).  In general, one usually requires SODEs to be at
least $C^0$ across the zero-section when possible; {\em e.g.,} for Finsler
spaces.  Most of our results are easily seen to hold {\em mutatis
mutandis\/} in these cases as well; any unobvious exceptions will be noted
specifically.

As we said, the desire to include Finsler spaces consistently was one of
our motivations.  What {\em should\/} be the Finsler-geodesic ``spray''
associated with a Finsler metric tensor is {\em not\/} a homogeneous
vector field, but an $h(1)$ SODE in our theory; see \cite{DR1} for related
results.  However, the Finsler geodesic coefficients have both $h(2)$ and
$h(1)$ parts, making what we shall see in Section \ref{fs} is an $h(1)$
semispray.

Several important results concerning quadratic sprays \cite{APS,BC,D,KV}
rely on the facts that each such spray $S$ determines a unique torsion-free
linear connection $\Gamma$, and conversely, every quadratic spray $S$ arises
from a linear connection $\Gamma$ the torsion of which can be assigned
arbitrarily.  The solution curves of the differential equation $\ddot c =
S_{\Gamma}\circ \dot c$ for a connection-induced spray are precisely the
geodesics of that (linear) connection.  These solution curves are not only
preserved under translations, as is true in general, but also under affine
transformations of the parameter $s \mapsto as+b $ for constants $a,b$ with
$a\neq 0$.  Note that, with our definition, the latter also holds
for homogeneous SODEs.

In the general case, a (possibly nonlinear) connection $\Gamma$ gives rise
to a quasispray $S$ (see Proposition \ref{cis}), but the correspondence
has not been studied before.  We shall extend most of the preceding
features of the quadratic spray--linear connection correspondence to the
general setting.  One of our ultimate goals is to determine just how well
nonlinear connections can be studied {\em via\/} their quasisprays.

We continue with the principal definitions. Let $S$ be a SODE on $M$.
\begin{definition}
We say that a curve $c:(a,b)\rightarrow M$ is a {\em geodesic\/} of $S$ or
an {\em $S$-geodesic\/} if and only if the natural lifting $\dot c$ of $c$
to $TM$ is an integral curve of $S$.
\end{definition}
This means that if $\ddot c$ is the natural lifting of $\dot c$ to $TTM$,
then $\ddot c = S(\dot c)$ is the $S$-geodesic equation.
\begin{definition}
We say that $S$ is {\em pseudoconvex\/} if and only if for each compact $K
\subseteq M$ there exists a compact $K^\prime \subseteq M$ such that each
$S$-geodesic segment with both endpoints in $K$ lies entirely within
$K^{\prime}$.
\end{definition}
If we wish to work directly with the integral curves of $S$, we merely
replace ``in''  and ``within'' by ``over''.
\begin{definition}
We say that $S$ is {\em disprisoning\/} if and only if no inextendible
$S$-geodesic is contained in (or lies over) a compact set of $M$.
\end{definition}
In relativity theory, such inextendible geodesics are said to be imprisoned
in compact sets; hence our name for the negation of this property.

Following this definition, we make a convention:  all $S$-geodesics are
always to be regarded as extended to the maximal parameter intervals ({\em
i.e.,} to be inextendible) unless specifically noted otherwise.  When the
SODE $S$ is clear from context, we refer simply to geodesics.  Note that
no SODE can be disprisoning on a compact manifold.  However,
Corollary~\ref{qcov} may be used to obtain results about compact manifolds
for which the universal covering is noncompact.

We refer to \cite{DRP1} for motivation, further general results, and
results specific to homogeneous SODEs (called homogeneous sprays there),
and to \cite{DRP2} for more examples.  Note that the SODEs in \cite{DRP1}
were positively homogeneous; the extension of those results to complete
homogeneity is straightforward, once the definition of homogeneous spray
there is corrected to the one for homogeneous SODE here.

\section{\heads Exponential maps}\label{exp}
Let $S$ be a SODE on $M$. We define the generalized exponential map{\em
s\/} (plural!) $\exp^\ve$ of $S$ as follows.

First let $p\in M$, $v\in T_p M$, and $c$ be the
unique $S$-geodesic such that
\begin{eqnarray*}
\ddc &=& S(\dc)\\
c(0) &=& p\\
\dc(0) &=& v
\end{eqnarray*}
Define
$$\exp^\ve_p(v) = c(\ve)$$
for all $v\in T_p M$ for which this makes sense.  From the existence of
flows ({\em e.g.,} \cite[p.\,175]{HS}), it follows that this is well
defined for all $\ve$ in some open interval $(-\ve_p,\ve_p)$, which in
general depends on $p$, and for all $v$ in some open neighborhood $U_p$ of
$0\in T_p M$, which in general depends on the choice of
$\ve\in(-\ve_p,\ve_p)$.  This defines $\exp^\ve_p$ at each $p\in M$.
\begin{remark}
On $TM-0$, it is frequently convenient to define $\exp^\ve_p(0) = p$. One
must then investigate the regularity near 0 in each case; {\em e.g.,} in
Finsler-related examples it usually turns out to be $C^1$.
\end{remark}

Next, choose a smooth function $\ve :  M \to \R$ such that $\ve(p) \in
(-\ve_p,\ve_p)$ for every $p \in M$.  (The smoothness of $\ve$ is for our
later convenience: we want $\exp^\ve_p$ to be smooth in $\ve$ as well as
in all other parameters.)  Then the global map $\exp^\ve$ is defined
pointwise by $(\exp^\ve)_p = \exp^{\ve(p)}_p$.  The domain of $\exp^\ve$
is a tubular neighborhood of the 0-section in $TM$ and the graph of $\ve$
lies in a tubular neighborhood of the 0-section in the trivial line bundle
$\R \times M$.

We have an example, given to us by J. Hebda, to show that it is possible
that $\ve_p<1$ for every open neighborhood of $0\in T_p M$ if the SODE is
inhomogeneous.
\begin{example}
Consider the SODE on $\R$ given by
$$\ddx(t) = \pi\left(1 + \dx(t)^2 \right).$$
To integrate, we rewrite this as
$$\frac{d\dx}{1+\dx^2} = \pi\, dt$$
and obtain
$$\arctan\dx = \pi\, t+C_1\,.$$
Thus
$$\dx(t) = \tan \left(\pi\, t + C_1\right), \quad\dx(0) = \tan C_1$$
so
$$x(t) = \log\bigl|\sec\left(\pi\, t + C_1\right)\bigr| + C_2\,.$$
For $C_1\ge 0$, $x$ cannot be continued beyond
\begin{eqnarray*}
\pi t + C_1 &=& \frac{\pi}{2}\,,\\[4pt]
t &=& \frac{1}{2} - \frac{C_1}{\pi} < 1\,.
\end{eqnarray*}
Therefore the usual exponential map of this SODE is not defined ({\em
i.e.,} at $t=\ve =1$) for all $C_1\ge 0$.
\end{example}

The closer the graph of $\ve$ gets to the 0-section of $\R \times M$, the
larger the tubular neighborhood of the 0-section in $TM$ gets.
\begin{proposition}
For $\ve_1 < \ve_2$, we have $\dom(\exp^{\ve_{\!\!\;1}}) \supset
\dom(\exp^{\ve_{\!\!\;2}})$, attaining all of $TM$ for $\ve = 0$ when
$\exp^0 = \pi$.\eop
\label{exp0}
\end{proposition}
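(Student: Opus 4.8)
The plan is to reduce the whole statement to a pointwise fact about the maximal solution intervals of the $S$-geodesic equation and then to patch over $M$. First I would fix $p\in M$ and $v\in T_pM$ and let $c=c_{p,v}$ be the unique inextendible $S$-geodesic with $c(0)=p$ and $\dc(0)=v$; by the results recalled in Section~\ref{rev} (existence by the Cauchy theorem, uniqueness on a maximal interval by the Hausdorff hypothesis, smooth dependence on all data on an open set from the existence of flows) its domain is an open interval $I_{p,v}\subseteq\R$ containing $0$. By the definition of $\exp^\ve$, one has $v\in\dom(\exp^\ve_p)$ if and only if $\ve\in I_{p,v}$; that is, $\dom(\exp^\ve_p)=\{\,w\in T_pM:\ve\in I_{p,w}\,\}$.

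The only property of $I_{p,v}$ needed is that it is an interval containing $0$. Hence if $\ve_1$ and $\ve_2$ lie on the same side of $0$ with $|\ve_1|\le|\ve_2|$ (in particular if $0\le\ve_1\le\ve_2$), then $\ve_2\in I_{p,v}$ forces $\ve_1\in I_{p,v}$, so $\dom(\exp^{\ve_2}_p)\subseteq\dom(\exp^{\ve_1}_p)$. To globalize, I would use that for a constant or varying choice $\ve:M\to\R$ one has $\dom(\exp^\ve)=\{\,w\in TM:\ve(\pi(w))\in I_{\pi(w),w}\,\}$, the union over $p$ of the fibrewise domains $\dom(\exp^{\ve(p)}_p)$. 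If $\ve_1$ and $\ve_2$ satisfy the comparison above at every point of $M$, the fibrewise inclusions assemble to $\dom(\exp^{\ve_1})\supseteq\dom(\exp^{\ve_2})$ inside $TM$; these sets are open because $\dom(\exp^\ve)$ is the preimage of the open flow domain under the continuous assignment $w\mapsto(\ve(\pi(w)),w)$, so for the admissible $\ve$ described before the proposition they are genuine tubular neighborhoods of the $0$-section. This is precisely the monotonicity ``the closer the graph of $\ve$ gets to the $0$-section, the larger the domain.''

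For the endpoint $\ve\equiv 0$: since $0\in I_{p,v}$ for every $(p,v)$, we get $\dom(\exp^0)=TM$; and because $0$ lies between $0$ and every value $\ve(p)$, the previous paragraph yields $\dom(\exp^\ve)\subseteq\dom(\exp^0)=TM$ for every admissible $\ve$, which is the ``attaining all of $TM$'' clause. Finally $\exp^0_p(v)=c_{p,v}(0)=p=\pi(v)$, so $\exp^0=\pi$. The only point that genuinely needs care is the sign bookkeeping concealed in ``$\ve_1<\ve_2$'': the inclusion holds whenever $\ve_1(p)$ lies between $0$ and $\ve_2(p)$ for every $p$, so the statement should be read with the $\ve_i$ of a single sign (e.g.\ nonnegative), and the inclusion need not be proper even when the inequality of the $\ve_i$ is strict --- for a complete $S$ every $I_{p,v}$ is $\R$ and every $\dom(\exp^\ve)=TM$.
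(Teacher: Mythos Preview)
The paper gives no proof of this proposition: it is stated with an end-of-proof marker immediately (the \texttt{\textbackslash eop}), so the authors regard it as self-evident from the preceding sentence about the graph of $\ve$ approaching the $0$-section. Your argument is correct and supplies exactly the details the paper suppresses: the key point is indeed that each maximal domain $I_{p,v}$ is an interval containing $0$, so membership of $\ve$ in $I_{p,v}$ is monotone in $|\ve|$ on each side of $0$, and the global domain is the fibrewise union.

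Your closing caveats are well taken and worth recording. The literal hypothesis ``$\ve_1<\ve_2$'' does not by itself give the inclusion (take $\ve_1$ large negative and $\ve_2$ small positive); one must read it, as you do, with the $\ve_i$ on the same side of $0$, which is consistent with the authors' informal phrasing ``the closer the graph of $\ve$ gets to the $0$-section.'' Likewise the strict containment symbol $\supset$ is not literally forced: for a geodesically complete $S$ every $I_{p,v}=\R$ and all domains equal $TM$. These are infelicities in the paper's statement rather than gaps in your argument.
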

This puts the bundle projection $TM\surj M$ in the interesting position of
being a member of a one-parameter family of maps, all of whose other members
are local diffeomorphisms. (This is reminiscent of singular perturbations.)
\begin{theorem}
For every\/ $\ve$ such that\/ $0<|\ve|<\ve_p$, the generalized exponential
map\/ $\exp^\ve_p$ is a diffeomorphism of an open neighborhood of\/ $0 \in
T_p M$ with an open neighborhood of\/ $p \in M$.
\end{theorem}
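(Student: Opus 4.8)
The plan is to compute the differential of $\exp^\ve_p$ at $0\in T_pM$ and show it is an isomorphism for small nonzero $\ve$; the result then follows from the inverse function theorem together with the smooth dependence of solutions on initial conditions. First I would set up the relevant one-parameter families of geodesics. For fixed $p$ and a fixed $v\in T_pM$, let $c_v$ denote the $S$-geodesic with $c_v(0)=p$, $\dot c_v(0)=v$, so that $\exp^\ve_p(v)=c_v(\ve)$. By the existence-of-flows results already cited (and the attendant smooth dependence on initial conditions), the map $(\ve,v)\mapsto c_v(\ve)$ is smooth on the open set where it is defined, hence $\exp^\ve_p$ is smooth in both $\ve$ and $v$ near $(\ve,0)$ for $|\ve|<\ve_p$. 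The key object is the derivative $d(\exp^\ve_p)_0:T_0(T_pM)\cong T_pM\to T_pM$.

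Next I would identify this differential. Fix $w\in T_pM$ and consider the variation $v(r)=rw$, giving a family of geodesics $\gamma(r,\ve)=c_{rw}(\ve)$ with $\gamma(r,0)=p$ for all $r$. Then $d(\exp^\ve_p)_0(w)=\tfrac{\partial}{\partial r}\big|_{r=0}c_{rw}(\ve)$, i.e. the value at parameter $\ve$ of the variation field $W(\ve)=\tfrac{\partial\gamma}{\partial r}(0,\ve)$ along the trivial geodesic $c_0$ (which, since $S\in\qsp$ need not be assumed here — $S$ is a general SODE — is the geodesic through $p$ with initial velocity $0$). The point is to show $W$ is the solution of the first-variation (Jacobi-type) linear ODE along $c_0$ with initial data $W(0)=0$, $\dot W(0)=w$: differentiating the geodesic equation $\ddot c = S(\dot c)$ in $r$ and evaluating at $r=0$ produces a linear second-order ODE for $W$, and the initial conditions come from $\gamma(0,\ve)=c_0(\ve)$ and $\tfrac{\partial}{\partial\ve}\tfrac{\partial\gamma}{\partial r}(0,0)=\tfrac{\partial}{\partial r}\big|_0\dot c_{rw}(0)=w$. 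Since the zero-velocity initial condition is a constant solution (or, more carefully, whatever $c_0$ is — I would just use that it is a fixed smooth curve independent of $w$), the linearized equation has smooth coefficients, so $W(\ve)$ depends linearly on $w$ and, by uniqueness for linear ODEs, $W(\ve)=\ve\, w + O(\ve^2)$ near $\ve=0$. Hence $d(\exp^\ve_p)_0 = \ve\cdot\mathrm{id} + O(\ve^2)$, which is invertible for all sufficiently small $\ve\ne 0$.

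The remaining step is to upgrade "sufficiently small $\ve\ne 0$" to "all $\ve$ with $0<|\ve|<\ve_p$." This is where homogeneity would normally be invoked to rescale, but the theorem as stated makes no homogeneity hypothesis, so instead I would argue directly: the map $(\ve,v)\mapsto\exp^\ve_p(v)$ is smooth on $(-\ve_p,\ve_p)\times U$ for a neighborhood $U$ of $0$, and the function $\ve\mapsto \det d(\exp^\ve_p)_0$ is a smooth real-valued function that is nonzero for small $\ve\ne 0$ — but a priori it could vanish for some larger $\ve$. I believe the intended reading (consistent with the preceding discussion of $\ve_p$ depending on $p$ and on the neighborhood) is that $\ve_p$ is chosen precisely so that this does not happen, or equivalently that the conclusion is local in the sense "for each such $\ve$ there is a neighborhood on which $\exp^\ve_p$ is a diffeomorphism onto its image"; granting that, the inverse function theorem finishes the proof. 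The main obstacle is exactly this global-versus-local issue in $\ve$: establishing nonvanishing of the differential uniformly on $(0,\ve_p)$ rather than merely for small $\ve$, which presumably is handled either by the definition of $\ve_p$ (shrinking it if necessary) or by a rescaling/reparametrization argument exploiting that translations of the parameter preserve $S$-geodesics. Once the differential is shown to be an isomorphism, smoothness of the inverse and the neighborhood statement are immediate from the inverse function theorem applied to the smooth map $\exp^\ve_p$.
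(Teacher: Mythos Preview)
Your approach is essentially the paper's: both compute the differential of $\exp^\ve_p$ at $0$ and finish with the inverse function theorem. The paper writes $\exp^\ve_p v=\pi\P(\ve,v)$ for the local flow $\P$ of $S$ and packages the derivative of $(\pi,\exp^\ve)$ on the $0$-section as a block matrix $\bigl(\begin{smallmatrix}0&A\\ I&I\end{smallmatrix}\bigr)$; your Jacobi-type variation field is precisely the computation of the relevant block $A=d(\exp^\ve_p)_0$, and your expansion $A=\ve\,I+O(\ve^2)$ is what underlies the paper's remark that $A=I$ in the homogeneous case with $\ve=1$.

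The gap you flag is real, and the paper's proof does not fill it either: it simply asserts that $A$ is invertible, citing flow theorems, but those only give that $\P(\ve,\cdot)_*$ is an isomorphism on $T_{0_p}TM$, not that $\pi_*\circ\P(\ve,\cdot)_*$ restricted to the vertical subspace is. In fact the literal statement can fail if $\ve_p$ is taken to be the maximal existence time of the zero-velocity geodesic. For the quasispray on $\R^2$ given by $\S(x,y)=(y^2,-y^1)$ one has $c_0\equiv p$ (so $\ve_p=\infty$), yet an easy integration shows $\exp^{2\pi}_p(v)=p$ for every $v\in T_p\R^2$, whence $d(\exp^{2\pi}_p)_0=0$. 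The resolution is exactly the one you proposed: read $\ve_p$ as already chosen small enough to exclude such conjugate points along $c_0$ (consistent with the paper's phrase ``some open interval $(-\ve_p,\ve_p)$''), or equivalently weaken the conclusion to all sufficiently small $\ve\ne 0$. With that reading your argument is complete and coincides with the paper's.
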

\begin{proof}
This follows from the flow theorems in ODE ({\em e.g.,} \cite[pp.\,175,
302]{HS}) and a slight generalization of the usual argument ({\em e.g.,}
\cite[p.\,116f\,]{BJ}).  Note that for $v \in T_p M$, $\exp^\ve_p v =
\pi\P(\ve,v)$ where $\P$ is the local flow of $S$. Then on the 0-section
of $TM$, the induced tangent map $(\pi,\exp^\ve)_*$ in block form is given
by
$$ \left[ \begin{array}{cc} 0 & A\\ I & I \end{array} \right] $$
where $A$ is invertible. (When $S$ is homogeneous and $\ve = 1$, then
$A=I$ as in the usual proof.)
\end{proof}
If desired, one could use the construction in the proof of Theorem 4.4
in \cite{DRP4} to obtain a more explicit form for this $A$.

For reference, we record the following obvious result.
\begin{lemma}
$\ve$ is a geodesic parameter; {\em i.e.,} the curve obtained by fixing
$v$ and varying $\ve$ is a geodesic through $p$.\eop
\end{lemma}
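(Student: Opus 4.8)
The plan is to unwind the definition of the generalized exponential maps and observe that the assertion is essentially tautological. First I would fix $p\in M$ and $v\in T_pM$ and let $c$ denote the unique inextendible $S$-geodesic with $c(0)=p$ and $\dc(0)=v$, so that $c$ solves $\ddc=S(\dc)$ on its maximal parameter interval $(\alpha,\beta)$ with $\alpha<0<\beta$. By construction, $\exp^\ve_p(v)=c(\ve)$ for exactly those $\ve$ for which the right-hand side is defined, namely $\ve\in(\alpha,\beta)$.

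Next I would hold $v$ fixed and let $\ve$ vary: the curve $\ve\mapsto\exp^\ve_p(v)$ is then literally $\ve\mapsto c(\ve)$, i.e.\ the geodesic $c$ with $\ve$ as its parameter. Since $c$ satisfies the $S$-geodesic equation $\ddc=S(\dc)$ with respect to this parameter, $\ve$ is a geodesic parameter and the curve is an $S$-geodesic; and since $c(0)=p$, it passes through $p$. That completes the argument.

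The one point that would deserve a word — and it is the closest thing to an obstacle here — is the domain: I would want to check that for fixed $v$ the set of admissible $\ve$ really is an interval around $0$, and not something smaller carved out by the $\ve$-dependent neighborhood $U_p$ appearing in Section~\ref{exp}. But this follows at once from existence and uniqueness of inextendible solutions together with the flow theorems: the admissible values are precisely those in the maximal interval $(\alpha,\beta)$ of the solution through $v$, which is open and contains $0$. So no real difficulty arises, in keeping with the result being recorded as an obvious one.
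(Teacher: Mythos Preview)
Your proof is correct and matches the paper's treatment: the paper records this lemma as an ``obvious result'' and gives no proof (the \texttt{\textbackslash eop} after the statement is the end-of-proof marker with no intervening argument). Your unwinding of the definition is exactly the tautology the authors have in mind, and your remark about the domain being the maximal interval of the solution through $v$ is the only point one could conceivably add.
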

Now consider another parameter $a$ as in
$$\exp^\ve_p(av)\,.$$
In general, $a$ will {\em not\/} be a geodesic parameter; {\em i.e.,} the
curve obtained by fixing $\ve$ and $v$ and varying $a$ is {\em not\/} a
geodesic through $p$.  See Figures \ref{j1} and \ref{j2} for a comparison.
Also note that these $a$-parameter curves are the exponentials of radial
lines in $T_p M$.
\begin{figure}
\begin{center}
\leavevmode
\def\epsfsize#1#2{.5#1}
\epsffile{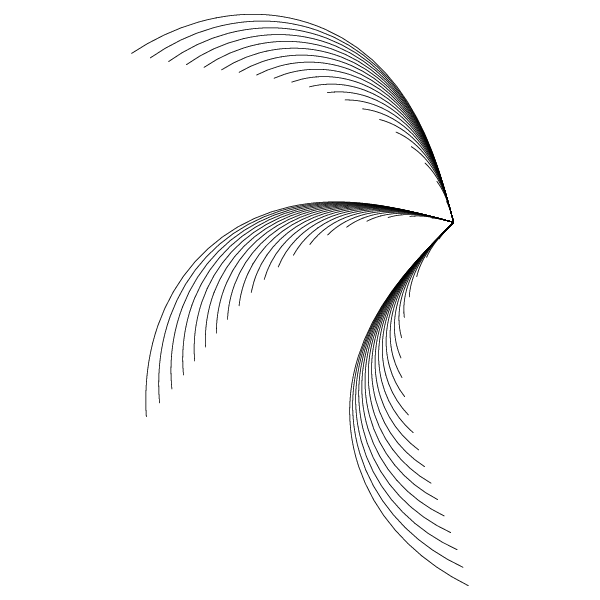}
\caption{\small curves $\exp^\ve_p(av)$ --- Each black curve is a geodesic
with $0 < \ve < 3$ and $a$ and $v$ fixed.  From shortest to longest in each
plume, $a$ steps in increments of 0.05 from 0.05 to 1. In each plume, $v$ is
constant.  There are three implicit $a$-parameter curves readily located,
one along the endpoints of each of the three plumes.}\label{j1}
\end{center}
\end{figure}\begin{figure}
\begin{center}
\leavevmode
\def\epsfsize#1#2{.6#1}
\epsffile{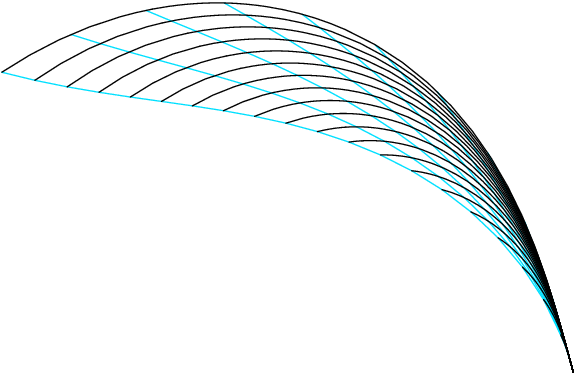}
\caption{\small curves $\exp^\ve_p(av)$ --- This is one plume from Figure
\protect\ref{j1}. Each black curve is a geodesic and each gray (blue)
curve is an $a$-parameter curve.  The new Jacobi fields are {\em along\/}
the black curves but {\em tangent\/} to the gray curves.}\label{j2}
\end{center}
\end{figure}\begin{proposition}
If $S$ is homogeneous, then $a$ as above is a geodesic parameter.
\end{proposition}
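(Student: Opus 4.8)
\emph{Plan.} The idea is to realize the curve $a\mapsto\exp^\ve_p(av)$ --- and indeed the whole family $\exp^\ve_p$ --- as reparametrizations of a single fixed $S$-geodesic, with homogeneity providing exactly the symmetry that makes this work. Throughout I write $c_w$ for the unique $S$-geodesic with $c_w(0)=p$ and $\dc_w(0)=w\in T_pM$, so that by definition $\exp^\ve_p(w)=c_w(\ve)$ and hence $\exp^\ve_p(av)=c_{av}(\ve)$.

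\emph{Step 1: a scaling relation for geodesics.} First I would show that homogeneity of $S$ forces
$$c_{\l w}(s)=c_w(\l s)$$
for every scalar $\l$ and every $s$ in the corresponding maximal interval. By uniqueness of $S$-geodesics it suffices to check that $s\mapsto c_w(\l s)$ is again an $S$-geodesic with the right initial data: its value at $s=0$ is $p$ and its initial velocity is $\l\,\dc_w(0)=\l w$, so only the $S$-geodesic equation remains. In induced coordinates, writing $c_w$ as $x(\cdot)$, that equation is $\ddx=\S(x,\dx)$; for $\g(s):=x(\l s)$ one has $\dot\g(s)=\l\,\dx(\l s)$ and $\ddot\g(s)=\l^2\,\ddx(\l s)=\l^2\,\S\bigl(x(\l s),\dx(\l s)\bigr)$, while homogeneity --- unwound from the defining relation $S(av)=a_*\av^{m-1}S(v)$ into the coordinate form $\S(x,\l y)=\l^m\S(x,y)$ --- gives $\S\bigl(\g(s),\dot\g(s)\bigr)=\S\bigl(x(\l s),\l\,\dx(\l s)\bigr)=\l^m\,\S\bigl(x(\l s),\dx(\l s)\bigr)$. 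When $S$ is a (quadratic) homogeneous spray the two exponents of $\l$ agree and these expressions coincide, so $\g$ solves the $S$-geodesic equation and $\g=c_{\l w}$. Equivalently, one can push $S$ forward by the fibrewise scalar multiplication $v\mapsto\l v$ on $TM$, read the push-forward off the homogeneity formula, and invoke the flow theorems exactly as in the proof of the theorem above.

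\emph{Step 2: conclusion.} With the scaling relation in hand, apply it twice --- once with $\l=a$ and once with $\l=\ve$:
$$\exp^\ve_p(av)=c_{av}(\ve)=c_v(a\ve)=c_v(\ve a)=c_{\ve v}(a).$$
Hence, with $\ve$ and $v$ held fixed, the curve $a\mapsto\exp^\ve_p(av)$ is nothing other than the $S$-geodesic $c_{\ve v}$ through $p$ with initial velocity $\ve v$, traversed with $a$ as its parameter. Thus the curve obtained by fixing $\ve$ and $v$ and varying $a$ is an $S$-geodesic through $p$; that is, $a$ is a geodesic parameter, as claimed. (For a quadratic spray this is the familiar fact that the radial curves $a\mapsto\exp_p(av)$ are geodesics.)

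\emph{The main obstacle} is Step 1: genuinely extracting the coordinate identity $\S(x,\l y)=\l^m\S(x,y)$ from the intrinsic definition of homogeneity, and then verifying that the time-rescaled curve again solves $\ddx=\S(x,\dx)$ --- which requires keeping careful track of the power of $\l$ so that the factor coming from the chain rule matches the one coming from homogeneity. Everything else is routine: the domain of $c_{\l w}$ is the $\l^{-1}$-rescaling of that of $c_w$, so the displayed equalities are to be read on the appropriate maximal intervals (and for $\l\ne0$ when one works over $TM-0$); and the case $v=0$ is immediate, since homogeneous SODEs vanish on the zero section and so $c_0$ is constant.
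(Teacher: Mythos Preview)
Your argument is essentially the same as the paper's, only made explicit. The paper's proof is a single sentence: for homogeneous $S$ one may take $\ve=1$, recover the usual exponential map, and then $a$ is ``the usual geodesic parameter.'' The content behind that sentence is precisely your scaling relation $c_{\l w}(s)=c_w(\l s)$: it is what lets one pass from $\exp^\ve_p(av)=c_{av}(\ve)$ to $c_{\ve v}(a)$ (equivalently, to $\exp^1_p(\ve av)$) and thereby identify the $a$-curve with a genuine $S$-geodesic. So the strategy coincides; you have simply written out the reparametrization argument that the paper invokes by name.

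Your caution about the exponent is apt and worth keeping. The chain rule produces $\l^2$, homogeneity produces $\l^m$, and these match only for $m=2$; thus both your Step~1 and the paper's appeal to the ``usual'' exponential map are really arguments about quadratic sprays. For $h(m)$ with $m\neq 2$ the identity $c_{\l w}(s)=c_w(\l s)$ fails, and with it the conclusion that the $a$-curves are geodesics in the given parametrization---so the paper's one-line proof carries the same tacit restriction you flagged. In short: same approach, your version is the honest one about where $m=2$ enters.
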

\begin{proof}
When $S$ is homogeneous, we can take $\ve=1$ and recover the usual
exponential map, and then $a$ is the usual geodesic parameter.
\end{proof}
The $a$-parameter curves are interesting:  they are the integral curves for
our new Jacobi vector fields. These were mentioned in \cite{DRP2} and will
be studied in more detail later \cite{DRP6}.  For now, we have the
following example.
\begin{example}
In $\R^2$, consider the SODE given by $\S^i(x,y) = y^i$ for $i=1,2$. The
geodesics are easily found to be $c(t) = v e^t + p$ where $v$ is the
initial velocity and $p$ is the initial position.  We can use the usual
exponential map since these curves are always defined for $t=1$. Thus we
obtain $\exp_{p}(v) = c(1) = v\,e + p$, regarding both $v$ and $p$ as
vectors in $\R^2$.

For the $a$-curves, we have $\exp_p(a\,v) = av\,e + p$, showing the
difference between the two types quite clearly: the geodesics have
exponential growth in velocity, while the $a$-curves have only linear
growth.
\end{example}
Finally, note that we could just as well define exponential-like maps
based on the $a$-curves and they would share most of the properties
of our new exponential maps.

\section{\heads Connections and their quasisprays}\label{cs}
A {\em (general) connection\/} on a manifold $M$ is a subbundle $\H$ of
the second tangent bundle $\pi_T:TTM\surj TM$ which is complementary to
the vertical bundle $\V$, so
\begin{equation}
TTM = \H\ds\V\, .
\label{ceq}
\end{equation}
The space of all connections on $M$ is denoted by $\econ(M)$, since this
definition is essentially due to Ehresmann \cite{E}.

Recall there are two vector bundle structures on $TTM$ over $TM$, denoted
here by $\pi_T$ and $\pi_*$.  While $\V$ is always a subbundle with
respect to both \cite[pp.\,18,20]{P}, $\H$ is a subbundle with respect to
$\pi_*$ if and only if the connection is linear \cite[p.\,32]{B}.

Also recall that quadratic sprays correspond to linear connections.  In
terms of the horizontal bundle $\H\! ,$ linearity is expressed as
$$\H_{av} = a_* \H_v $$
for $a\in\R$ considered as a map $TM\to TM$ and $v\in TM$. Thus one has
\begin{equation}
\H_{av} = a_*a^{m-1}\H_v
\label{hceq}
\end{equation}
as the second defining equation, together with (\ref{ceq}), of a
connection that is $h(m)$.
\begin{remark}
For an $h(m)$ semispray $S$ with integral $m$, Grifone's \cite{G1}
associated (generalized) connection coefficients $\Gamma$ are $h(m-1)$,
appropriately.  See (\ref{ours}) below for our version, which allows for
inhomogeneous connections.
\end{remark}

Here is the SODE induced by a connection.  We shall call it the {\em
geodesic quasispray\/} associated to the connection and its geodesics the
{\em geodesics\/} of the connection.
\begin{theorem}\label{cis}
For each connection $\H\! ,$ there is an induced SODE $S$ given by
$$S(v) = \pi_*\big|^{-1}_{\H_v} (v)\, ,$$
where $\pi:TM\surj M$ is the natural projection and $v\in TM$. We write
$\H\vdash S$ to denote this relationship.
\end{theorem}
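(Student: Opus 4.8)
The plan is to verify three things in turn: that the formula defining $S(v)$ is meaningful, that the section $S$ so obtained is smooth, and that it is genuinely a SODE in the sense of Definition~\ref{dfs}.

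\emph{The formula makes sense.} Fix $v\in TM$ and put $p=\pi(v)$. By the defining splitting $(\ref{ceq})$ we have $T_v(TM)=\H_v\ds\V_v$, and since $\V_v=\ker\bigl(\pi_*\colon T_v(TM)\to T_pM\bigr)$ it follows that $\H_v\cap\ker\pi_*=0$, so the restriction of $\pi_*$ to $\H_v$ is injective. Because $\pi$ is a submersion, $\pi_*\colon T_v(TM)\to T_pM$ is onto, hence $\dim\H_v=\dim T_v(TM)-\dim\V_v=\dim T_pM$; therefore $\pi_*\big|_{\H_v}\colon\H_v\to T_pM$ is a linear isomorphism and $S(v):=\pi_*\big|^{-1}_{\H_v}(v)\in\H_v\subset T_v(TM)$ is well defined.

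\emph{Smoothness.} As $\H$ is a smooth subbundle of $\pi_T\colon TTM\surj TM$, the restriction to $\H$ of the smooth bundle map $\pi_*$ is smooth and fiberwise linear, and by the previous step it is a fiberwise isomorphism; it is therefore a vector bundle isomorphism $\H\to\pi^*TM$ with smooth inverse, and $S$ is that inverse composed with the tautological diagonal section $TM\to\pi^*TM$. Equivalently, in induced local coordinates $\H_{(x,y)}=\{\,(x,y,X,-\G^i_j(x,y)X^j):X\in\R^n\,\}$ with smooth $\G^i_j$, so that $S(x,y)=(x,y,y,-\G^i_j(x,y)\,y^j)$, which makes smoothness manifest.

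\emph{$S$ is a SODE.} Since $S(v)\in\H_v$ and $\H_v$ is by definition the fiber of $\pi_T$ over $v$, we have $\pi_T(S(v))=v$, so $S$ is a section of $\pi_T$; moreover $\pi_*(S(v))=v$ holds by the very definition of $S(v)$ as a value of $\pi_*\big|^{-1}_{\H_v}$. Thus $\pi_T\circ S=\pi_*\circ S=\mathrm{id}_{TM}$. Now $\fix J=\{\,w\in TTM:\pi_T(w)=\pi_*(w)\,\}$, which is immediate in induced local coordinates from $J(x,y,X,Y)=(x,X,y,Y)$; hence the identity $\pi_T\circ S=\pi_*\circ S$ forces $JS=S$, i.e. $S\in\G(\fix J)=\sode(M)$. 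The same local expression $(x,y,y,-\G^i_j(x,y)\,y^j)$ already displays $S$ as a SODE, and since $\pi_*\big|^{-1}_{\H_{0_p}}$ sends $0_p\in T_pM$ to $0$, it also shows $S$ vanishes on the zero section, so in fact $S\in\qsp(M)$ --- which is why we call it the geodesic quasispray of $\H$.

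\emph{Where the content lies.} Steps one and two are routine linear algebra and standard bundle theory; the only step with real content is that the construction lands in $\fix J$, not merely among sections of $\pi_T$, and the cleanest argument is the identification $\fix J=\{\pi_T=\pi_*\}$ together with the tautological identities $\pi_T\circ S=\pi_*\circ S=\mathrm{id}_{TM}$. Should one want a coordinate-free proof of that identification, the inclusion ``$\subseteq$'' is formal ($Jw=w$ gives $\pi_T w=\pi_T(Jw)=\pi_* w$), and both sides are fiber subbundles of $TTM\surj TM$ of the same fiber dimension, hence equal.
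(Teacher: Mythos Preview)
Your proof is correct and follows essentially the same approach as the paper's: the paper's one-sentence argument is that $S$ is a section of $\pi_*$ by construction and a section of $\pi_T$ because $\H$ is a $\pi_T$-subbundle, which is exactly your step three fleshed out; your well-definedness and smoothness paragraphs simply make explicit what the paper leaves implicit (with a reference to Poor). One small wording slip: $\H_v$ is not ``the fiber of $\pi_T$ over $v$'' but rather a subspace of that fiber---your conclusion $\pi_T(S(v))=v$ is of course unaffected.
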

\begin{proof}
As in the first paragraph of Poor's proof of 2.93 \cite[p.\,95]{P}, it is
easily verified that $S$ so defined is a SODE.  Indeed, $S$ is a section of
$\pi_*$ by construction, and $S$ is a section of $\pi_T$ because $\H$ is a
subbundle with respect to $\pi_T$.
\end{proof}
It is clear that this $S$ is horizontal, so compatible with the given
connection, and that it vanishes on the 0-section of $TM$.  This latter
means that constant curves, $c(t) = p\in M$ for all $t$, are degenerate
$S$-geodesics, a familiar property of geodesic sprays. Accordingly, we
shall refer to any SODE which vanishes on the 0-section of $TM$ as a
{\em quasispray}.

Unfortunately, when the connection is $h(m-1)$ this SODE is not
homogeneous {\em as a SODE;} it is only an $h(m)$ {\em vector field\/} on
$TM$.  In order to avoid this problem, we must consider a new type of
partial homogeneity for connections.
\begin{definition}\label{dvhc}
A connection $\H$ on $TM$ is {\em vertically homogeneous\/} of degree $m$,
denoted by $vh(m)$, if and only if
\begin{equation}\label{vhceq}
\H_{av} = a_* \av^{m-1} \H_v
\end{equation}
where $\av$ denotes scalar multiplication by $a$ in the vertical bundle
$\V$.
\end{definition}
Note that $h(m)$ and $vh(m)$ coincide only for $m=1$, the linear
connections.
\begin{proposition}\label{hgs}
If $\H$ is a connection with geodesic quasispray $S$, then $S$ is $h(m)$
if and only if $\H$ is $vh(m-1)$.
\end{proposition}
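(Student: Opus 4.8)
The plan is to reduce the statement to one identity in induced local coordinates. Fix such coordinates; since $\H_{(x,y)}$ is a linear complement of the fiber $\V_{(x,y)}=\{(x,y,0,Y)\}$ of $\pi_T$, it is a graph, so we may write
$$\H_{(x,y)}=\{(x,y,X,-\G(x,y)X):X\},$$
where $\G(x,y)$ is a linear map depending smoothly on $(x,y)$ (the local connection operator). By Theorem~\ref{cis}, the element of $\H_v$ lying over $v$ under $\pi_*$ is forced to have base part $X=y$, so the geodesic quasispray is $\S(x,y)=-\G(x,y)\,y$. This single formula is what I would exploit.

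The next step is to translate both homogeneity hypotheses into homogeneity of these coefficients. From the explicit form of $a_*\av^{m-1}$ recorded after Definition~\ref{hms}, ``$S$ is $h(m)$'' says exactly that $\S(x,ay)=a^m\,\S(x,y)$, i.e.\ $\S$ is homogeneous of degree $m$ in the fiber variable; and pushing the operator of (\ref{vhceq}), with $m$ replaced by $m-1$, through the graph above, ``$\H$ is $vh(m-1)$'' says exactly that $\G(x,ay)=a^{m-1}\,\G(x,y)$ as operators, i.e.\ $\G$ is homogeneous of degree $m-1$ there (the case $m=1$ giving back the linearity of $\H$, as noted after Definition~\ref{dvhc}). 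Granting these translations, the ``if'' direction is a single line: if $\G(x,ay)=a^{m-1}\G(x,y)$ then, by linearity of $\G(x,y)$,
$$\S(x,ay)=-\G(x,ay)(ay)=-a^{m-1}\G(x,y)(ay)=-a^m\,\G(x,y)\,y=a^m\,\S(x,y).$$

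I expect the ``only if'' direction to be the real obstacle. Homogeneity of $\S$ directly yields only $a\,\G(x,ay)\,y=a^m\,\G(x,y)\,y$, hence $\G(x,ay)\,y=a^{m-1}\G(x,y)\,y$, and this constrains $\G$ only along the diagonal $X=y$; indeed the geodesic quasispray sees a connection only through $X\mapsto\G(x,y)X$ evaluated at $X=y$, so distinct connections can share it. To upgrade the diagonal identity to the operator identity $\G(x,ay)=a^{m-1}\G(x,y)$ --- equivalently $vh(m-1)$ --- I would use the connection canonically produced from a quasispray in Section~\ref{cs}, whose coefficients are built from the fiber derivatives of $\S$; Euler's identity then makes these derivatives homogeneous of degree $m-1$ out of the degree-$m$ homogeneity of $\S$, which is $vh(m-1)$. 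Throughout I would run the two homogeneity regimes in parallel: for $m\ge1$ the scalar $a=0$ is included (by continuity across the zero section), while for $m<1$ the argument is read on $TM-0$.
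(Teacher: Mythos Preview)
Your ``if'' direction is correct and is exactly the local-coordinate calculation the paper points to (via Poor): once $\H$ is written as the graph of $-\G(x,y)$ and $S$ is read off as $\S(x,y)=-\G(x,y)y$, vertical homogeneity of $\G$ immediately gives homogeneity of $\S$.

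The ``only if'' direction, however, has a genuine gap that your proposed remedy does not close. You correctly observe that $S$ sees $\H$ only through the diagonal values $\G(x,y)y$, so that homogeneity of $\S$ yields only $\G(x,ay)y=a^{m-1}\G(x,y)y$ and not the operator identity $\G(x,ay)=a^{m-1}\G(x,y)$. Your fix is to pass to the LC connection canonically built from $S$ and apply Euler's identity to the fiber derivatives of $\S$; but this proves that the \emph{LC connection of $S$} is $vh(m-1)$, not that the \emph{given} $\H$ is. Since the proposition is stated for an arbitrary $\H$ with geodesic quasispray $S$, and since by Theorem~\ref{=geos} many connections share the same $S$, you have established the wrong statement. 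Indeed, your own diagnosis already contains a counterexample in embryo: take any $vh(m-1)$ connection and perturb its coefficients by an alternating $D$ (so $D^k_i(y)y^i=0$) that is \emph{not} homogeneous of degree $m-1$ in $y$; the perturbed connection keeps the same $h(m)$ quasispray but fails $vh(m-1)$. The paper's own proof is only a pointer to a ``similar calculation'' in Poor, so one cannot extract from it the missing hypothesis; but as the proposition is literally written, the converse requires more than the Euler-type argument you sketch, and your proof does not supply it.
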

\begin{proof}
That $S$ is $h(m)$ if $\H$ is $vh(m-1)$ follows as in the second
paragraph of Poor's proof of 2.93 \cite[p.\,95]{P}, {\em mutatis
mutandis;} the converse results from a similar calculation.
\end{proof}

Connections may also be seen as sections of the bundle $G_H(TTM)$ of all
possible horizontal spaces, a subbundle of the Grassmannian bundle
$G_n(TTM)$.  To see what structure $G_H(TTM)$ has, consider $\R^{2n} =
\R^n\ds\R^n$ as the model fiber of $TTM$ and regard the first summand as
horizontal, the second as vertical.  With $GL_{2n}$ as the structure group
of $TTM$, we want the subgroup $A_H$ that preserves the vertical space and
maps any one horizontal space into another.  This can be conceived as
occurring in two steps.  First, we may apply any automorphisms of the
vertical and horizontal spaces separately.  Second, we may add vertical
components to horizontal vectors to obtain the new horizontal space.
$$ \arraycolsep=.5em \left[
\begin{array}{cc} I & 0 \\ \gl_n & I \end{array} \right] \cdot \left[
\begin{array}{cc} GL_n & 0 \\ 0 & GL_n \end{array} \right] $$
Our group $A_H$ is thus found to be a semidirect product entirely
analogous to an affine group.  The action is transitive and the right-hand
factor is the isotropy group of a fixed horizontal space, so the model
fiber for $G_H(TTM)$ is the resulting homogeneous space.  The induced
operation on representatives being given by
$$ \arraycolsep=.5em \left[
\begin{array}{cc} I & 0 \\ A & I \end{array} \right] \cdot \left[
\begin{array}{cc} I & 0 \\ B & I \end{array} \right]  = \left[
\begin{array}{cc} I & 0 \\ A+B & I \end{array} \right], $$
it follows that $G_H(TTM)$ is an affine bundle (bundle of affine spaces,
{\em vs.} vector spaces).  Thus a connection, being a section of this
bundle, provides a choice of distinguished point in each fiber, hence a
vector bundle structure on this affine bundle.

If we wish to consider only those connections compatible with a given
quasispray, we just replace arbitrary elements of $\gl_n$ with those
having a first column comprised entirely of zeros.  Note that this yields
an affine subbundle $G_H^S(TTM)$ of $G_H(TTM)$, with fibers being pencils
of possible horizontal spaces.
\begin{theorem}[extended APS]\label{cce}
Given a quasispray $S$ on $M$, there exists a compatible connection $\H$
in $TTM$.
\end{theorem}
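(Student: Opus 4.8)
The plan is to fix an arbitrary connection and then tilt it vertically until its geodesic quasispray becomes $S$; this is the ``existence of a connection realizing a given spray'' half of the extended APS correspondence. Since $TM$ is paracompact and, as explained just above, $G_H(TTM)\to TM$ is a genuine affine bundle, it admits a global section, so $\econ(M)\ne\emptyset$ (one could instead take the Levi-Civita connection of an auxiliary Riemannian metric on $M$). Fix such an $\H_0$ and let $S_0$ be its geodesic quasispray, so $\H_0\vdash S_0$ by Theorem~\ref{cis}. Both $S$ and $S_0$ are SODEs, so $\sigma:=S-S_0$ is a section of $\V$ --- recall from Section~\ref{rev} that $\sode(M)$ is affine over $\G(\V)$ --- and since $S,S_0\in\qsp(M)$, this $\sigma$ vanishes on the $0$-section of $TM$. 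The goal then reduces to correcting $\H_0$ by something that changes its quasispray by exactly $\sigma$.

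For $\phi\in\G(\Hom(\H_0,\V))$ (a vector bundle over $TM$) put $\H_\phi:=\{\,h+\phi(h):h\in\H_0\,\}$. This is again a subbundle of $\pi_T\colon TTM\surj TM$ complementary to $\V$ --- if $h+\phi(h)\in\V$ then $h\in\H_0\cap\V$, hence $h=0$ --- so it is a connection. Its geodesic quasispray is $S_0+\phi\circ S_0$: indeed $S_0(v)+\phi(S_0(v))\in(\H_\phi)_v$, and since $\phi(S_0(v))\in\V=\ker\pi_*$ it still projects to $v$ under $\pi_*$, so by the uniqueness in Theorem~\ref{cis} it equals $S_{\H_\phi}(v)$. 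Thus it suffices to find $\phi$ with $\phi\circ S_0=\sigma$.

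To solve $\phi\circ S_0=\sigma$, note first that it is affine in $\phi$, so it is enough to solve it locally and patch with a partition of unity on $M$ pulled back to $TM$. Locally, take induced coordinates in which $\H_0$ has horizontal frame $h_i=\partial_{x^i}-(N_0)^k_i\,\partial_{y^k}$; one checks $S_0(v)=y^i h_i$. Writing $\sigma=\sigma^j\,\partial_{y^j}$ with $\sigma^j(x,0)=0$, Hadamard's lemma (Taylor with integral remainder in the $y$-variables) yields smooth $g^j_i$ with $\sigma^j=y^i g^j_i$, whence $\phi(h_i):=g^j_i\,\partial_{y^j}$ works on that chart. The glued $\phi$ then gives the desired connection $\H:=\H_\phi$ with $\H\vdash S$; concretely, $\H$ is a global section of the affine bundle $G_H^S(TTM)$ constructed just above.

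The step that needs care is the behavior over the $0$-section. There $S(0)=0$, so compatibility imposes no constraint, and the fibre of compatible horizontal spaces jumps in dimension from $n(n-1)$ to $n^2$; thus $G_H^S(TTM)$ is not literally locally trivial there and a bare ``an affine bundle has a section'' invocation is not quite enough. The tilt formulation handles this automatically: the vanishing of $\sigma$ on the $0$-section is exactly what makes each local $\phi$ extend smoothly across it (this is where Hadamard's lemma enters), and the partition-of-unity average still solves $\phi\circ S_0=\sigma$ because the solution set, although its fibre dimension jumps, is an honest affine subspace of $\G(\Hom(\H_0,\V))$. Everything else is routine bookkeeping with the two vector-bundle structures $\pi_T,\pi_*$ on $TTM$.
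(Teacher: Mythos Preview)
Your argument is correct and establishes the existence claim, but it takes a genuinely different route from the paper's.

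The paper offers two arguments. First, it invokes obstruction theory on the affine bundle $G_H^S(TTM)$ (contractible fibres). Second---and more importantly for what follows---it sketches an explicit construction \emph{via} the generalized exponential maps of Section~\ref{exp}, adapting Poor's classical argument for quadratic sprays (with the detailed proof deferred to \cite{DRP4}). This explicit construction produces a \emph{specific} compatible connection, the ``LC connection,'' which the paper then declares to be the torsion-free representative (Definition~\ref{torsfree}) and uses to define generalized torsion (Definition~\ref{tors}).

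Your tilt-by-$\phi$ approach is more elementary: it avoids both exponential maps and obstruction theory, reducing existence to Hadamard's lemma and a partition of unity. It also confronts head-on the subtlety that $G_H^S(TTM)$ fails to be locally trivial across the $0$-section (the fibre dimension jumps from $n(n-1)$ to $n^2$), a point the paper's one-line obstruction-theory remark glosses over. On the other hand, your $\H_\phi$ depends on the auxiliary choice of $\H_0$ and on the local Hadamard splittings, so it is not canonical and cannot serve as the paper's distinguished torsion-free standard. For the bare existence statement your argument is cleaner; for the paper's programme---where the resulting connection must be \emph{the} LC one---the exponential-map construction is essential.

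One small point: the equation $\phi\circ S_0=\sigma$ is actually linear (not merely affine) in $\phi$, which is precisely why the partition-of-unity average still solves it.
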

Since the fibers of $G_H^S(TTM)$ are contractible, this is an easy
exercise in obstruction theory \cite[Ch.\,8]{DP}; however, an explicit
construction is desirable to provide a concrete representation for our
extension of the Ambrose-Palais-Singer correspondence, and we gave a
detailed proof in \cite{DRP4}.  For the convenience of the reader, we
repeat the complete proof.  First, we provide a brief sketch.  It mostly
follows the usual outline \cite[proof of Thm.\,2.98, pp.\,97ff\,]{P}, but
(as noted earlier) the exponential maps {\em do not\/} map radial lines in
the tangent space into geodesics in the base, so considerable extra care
is required to use correct pre-images of geodesics instead.
\begin{proof}
Let $\P$ denote the local flow of $S$ and $\g$ an integral curve of $S$
with $\g(0) = v \in T_p M$.  The basic idea is to use $S$ and $\P$ to {\em
define\/} notions of {\em horizontal\/} and {\em parallel\/} which will
coincide with the usual ones along $\g$ for any $\H \nd S$.  This is
essentially the same as the usual construction \cite{P}.  The problem is
that for inhomogeneous $S$, the ray $\{tv\}$ in $T_p M$ does {\em not\/}
exponentiate to a geodesic in $M$.

To remedy this, we proceed as follows. For each $v\in T_p M$, choose
$\ve_v$ so that $\exp_p^{\ve_{\!\!\:v}}v$ is defined. Such $\ve_v$ exist
by Proposition \ref{exp0}. For $0 \le t \le \ve_v$, define
\begin{equation}
\alpha_v(t) = \left(\exp_p^{\ve_{\!\!\:v}}\right)^{-1}\exp_p^t v\,.
\end{equation}
Then $\alpha_v(0) = 0$, $\alpha_v(\ve_v) = v \in T_p M$, and $\alpha_v$
exponentiates to the geodesic with initial condition $v$ at $p$. Note that
if $S$ is homogeneous, then $\alpha_v(t) = tv$.

We have a vector bundle map $\cJ:\pi^*TM\to \V$ which is an isomorphism on
fibers.  It is one version of canonical parallel translation on a vector
space, identifying the tangent space at each point with the vector space
itself.  Now, for each $w \in T_p M$ define
\begin{equation}
\H_w = \left\{ \left.\frac{d}{dt}\right|_{t=0} \pi_*
\P_{t*}\,\cJ_{\alpha_{\!\!\;v}(t)}w \Bigm| v \in T_p M \right\} .
\end{equation}
Clearly, this does not depend on the choices of $\ve_v$ made earlier.
(Note we are evaluating at 0.)  If $S$ is quadratic, it is easy to check
that this coincides with the usual construction as found in
\cite[pp.\,96--97]{P}, since in that case $\exp_p tv = \pi\P(t,v)$ for $v
\in T_p M$.  The proof that $\H$ so defined is a connection and that $\H
\nd S$ follows Poor's proof of 2.98 \cite[pp.\,97--99]{P} {\em mutatis
mutandis.}
\end{proof}
These connections will be our ``standard"---our generalization of
torsion-free linear connections; {\em viz.}\ equation (\ref{lcsg}),
Definition \ref{torsfree} and after.  In light of this, and the fact that
when applied to pseudoRiemannian geodesic sprays this construction yields
the Levi-Civita connection, we shall call them LC connections; {\em
cf.}\ Poor \cite[2.104 and 3.29]{P}.

\begin{remark}\label{c/q}
Note that the space of connections $\econ(M)$ fibers trivially over the
space of quasisprays $\qsp(M)$ since the latter has a vector space
structure, albeit not one compatible with that of all vector fields on
$TM$.
\end{remark}

\begin{remark}\label{c/ss}
Recall that any SODE on $TM-0$ is called a semispray. This is justified by
the fact that any construction such as ours that produces a compatible
connection over $TM$ from a quasispray there also produces one over
$TM-0$ from {\em every\/} SODE there.  In particular, this means that for
a SODE on $TM$ that is not a quasispray, the restriction of this SODE to
$TM-0$ is a semispray with a compatible connection over $TM-0$ even though
the original SODE did {\em not\/} have one over $TM$.  Such SODEs do not
seem to have been noted before, and further study of them is clearly
warranted.
\end{remark}

Here is an alternative, axiomatic characterization of a connection in terms
of the horizontal projection $H$.
\begin{itemize}
\item[\bf C1 ] $H$ is a smooth section of $\End(TTM)$ over $TM$.

\item[\bf C2 ] $H^2 = H$.

\item[\bf C3 ] $\ker H = \V$.
\end{itemize}
Then $\H = \im H$ is the horizontal bundle. Vertical homogeneity is
expressed with an optional axiom.
\begin{itemize}
\item[\bf Ch ] $H$ is $vh(m)$ if and only if $H_{av}a_* = a_*\av^{m-1}
H_v$ for all $v\in TM$ and $a\in\R$ ($v\in TM-0$ and $a\ne 0$ for $m<0$).
\end{itemize}
Homogeneous connections may be similarly axiomatized.

There is a natural vector bundle map $K:\V\to TM$ respecting $\pi_T$ which
is an isomorphism on fibers, a version of canonical parallel translation
of a vector space.  Using this, we define a connection map or connector
for an arbitrary connection and thence a covariant derivative.
\begin{definition}\label{kap}
For a connection $\H\! ,$ define the associated {\em connector\/} $\kappa
:TTM\to TM : z\mapsto K(z - H_v z)$ for $z\in T_v TM$.
\end{definition}
\begin{proposition}\label{kvbm}
The connector $\kappa$ is a vector bundle map respecting $\pi_T$ but\/ {\em
not} $\pi_*$ in general.  It respects $\pi_*$ if and only if the connection
is linear.
\end{proposition}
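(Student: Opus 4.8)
The plan is to reduce the statement to a single coordinate formula for $\kappa$, together with the characterization recalled just before the proposition, that a connection is linear exactly when $\H$ is a subbundle with respect to $\pi_*$. In induced coordinates $(x,y,X,Y)$ on $TTM$ one has $\pi_T(x,y,X,Y)=(x,y)$, $\pi_*(x,y,X,Y)=(x,X)$, $\V=\{(x,y,0,Y)\}$, and $K(x,y,0,Z)=(x,Z)$. Since $\H_{(x,y)}$ is a complement to $\V_{(x,y)}$ inside the fibre $T_{(x,y)}TM$, it is the graph of a linear map, so $\H_{(x,y)}=\{(x,y,X,-N(x,y)X)\}$ for a smooth $\gl_n$-valued function $N$; hence $H(x,y,X,Y)=(x,y,X,-N(x,y)X)$, $(I-H)(x,y,X,Y)=(x,y,0,Y+N(x,y)X)$, and therefore
\[
  \kappa(x,y,X,Y)=\bigl(x,\;Y+N(x,y)X\bigr).
\]
The connection is linear precisely when $N(x,\cdot)$ is linear for each $x$ (equivalently $N^i_k(x,y)=\Gamma^i_{jk}(x)y^j$), which is the same as $\H$ being a subbundle for $\pi_*$.

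Next I would show $\kappa$ respects $\pi_T$, and I would phrase this without coordinates: $K\colon\V\to TM$ is by construction a fibre isomorphism respecting $\pi_T$; by \textbf{C1} the endomorphism field $H$ is fibrewise linear over the identity for $\pi_T\colon TTM\surj TM$, so $I-H$ is as well, and by \textbf{C2} and \textbf{C3} its image is $\ker H=\V$; thus $\kappa=K\circ(I-H)$ is a composite of maps respecting $\pi_T$, and covers $\pi\colon TM\surj M$. (The displayed formula records the same thing: for fixed $(x,y)$ the quantity $Y+N(x,y)X$ is linear in $(X,Y)$, and its base point $x$ depends only on $\pi_T(x,y,X,Y)$.)

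For the equivalence with $\pi_*$ I would argue the two directions separately. If the connection is linear, then $N(x,y)X=\Gamma(x)(y,X)$ is bilinear, and using the $\pi_*$-operations --- which act in the $(y,Y)$-slot by $(x,y_1,X,Y_1)+(x,y_2,X,Y_2)=(x,y_1+y_2,X,Y_1+Y_2)$ and $a\cdot(x,y,X,Y)=(x,ay,X,aY)$ --- a one-line check shows $\kappa$ is linear on each $\pi_*$-fibre and has image in $T_xM$, with $x$ determined by $\pi_*(x,y,X,Y)=(x,X)$; so $\kappa$ respects $\pi_*$ and covers $\pi$ (this is the classical connection map; {\em cf.}\ \cite{P}). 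Conversely, suppose $\kappa$ respects $\pi_*$. One first observes $\ker\kappa=\H$: since $z-H_vz\in\V_v$ and $K$ restricts to a fibre isomorphism on $\V$, $\kappa(z)=0$ iff $z=H_vz$ iff $z\in\im H=\H$. Since $\kappa$ is onto $T_xM$ on each $\pi_*$-fibre (vary $Y$), it has constant rank $n$ there, so $\ker\kappa=\H$ is a subbundle for $\pi_*$, whence the connection is linear by the fact recalled above (see \cite[p.\,32]{B}). Equivalently, fibrewise $\pi_*$-linearity applied to the displayed formula forces $N(x,y_1+y_2)X=N(x,y_1)X+N(x,y_2)X$ and $N(x,ay)X=aN(x,y)X$ for all $X$, so $N(x,\cdot)$ is linear --- hence $\kappa$ does not respect $\pi_*$ for any nonlinear connection, which is the qualification in the statement.

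I expect the only real difficulty to be bookkeeping rather than conceptual: at every step one must be clear about which of the two vector bundle structures on $TTM$ is in play --- the $\pi_*$-operations act on the $(y,Y)$-coordinates while the $\pi_T$-operations act on $(X,Y)$ --- and one must use essentially that $K$ is a fibre isomorphism $\V\to TM$ respecting $\pi_T$, since that is what makes $\ker\kappa=\H$ hold and lets one invoke the linearity dictionary for $\H$ and $\pi_*$.
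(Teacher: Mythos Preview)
Your proof is correct. The paper itself gives no explicit argument here---its entire proof is the one-line reference ``As in Poor \cite[p.\,72f\,]{P}, {\em mutatis mutandis}''---and what you have written is precisely the kind of explicit computation that reference is pointing to, carried out in the present (possibly nonlinear) setting: the coordinate formula $\kappa(x,y,X,Y)=(x,\,Y+N(x,y)X)$, the observation that $I-H$ is $\pi_T$-fibrewise linear by axiom {\bf C1}, and the identification $\ker\kappa=\H$ together with the dictionary ``$\H$ is a $\pi_*$-subbundle $\Longleftrightarrow$ the connection is linear'' from \cite[p.\,32]{B}. So your approach is not different from the paper's; it simply spells out what the paper leaves to the cited source.
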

\begin{proof}
As in Poor \cite[p.\,72f\,]{P}, {\em mutatis mutandis.}
\end{proof}
According to Besse \cite[p.\,32f\,]{B}, a {\em symmetric\/} connector
(connection) is invariant under the natural involution $J$ of $TTM$.
Clearly this is possible only for linear connections.

Now we are ready for the main event. Let $V$ and $U$ be a vector fields on
$M$ with $V_p = v$ and $U_p = u$.
\begin{definition}\label{cd}
The {\em covariant derivative\/} associated to the connection $\H$ is
the operator defined by
$$\del{U}V = \kappa(V_* U) = K(V_* U - H_V V_* U)$$
and is tensorial in $U$ but {\em nonlinear\/} (in general) in $V$.
\end{definition}
This last comes from the general lack of respect for the $\pi_*$ structure
by $\H\! ,$ $H\!$, and $\kappa$.
\begin{example}\label{hex}
We always have $\del{0}V = 0$.  For all $vh(m)$ connections, $\del{U}aV =
K(a_*V_*U - H_{aV}a_*V_*U) = aK(V_*U - \av^{m-1}H_V V_*U)$, and similarly
for homogeneous ones. So (vertically) homogeneous connections do not
differ significantly from linear ones.  In particular, $\del{U}0 = 0$ for
all $U$ for all (vertically) homogeneous connections; in fact, they all
have the same horizontal spaces along the 0-section of $TM$, namely the
subspaces tangent to it ({\em i.e.,} those in the image of $0_* : TM\to
TTM$).  We call all such connections sharing this property {\em
0-preserving;} they differ minimally from (vertically) homogeneous
(including linear) connections.  In contrast, connections with $\del{U}0
\ne 0$ for even some $U$ are much farther from linear; we call them {\em
strongly nonlinear.} See Figure \ref{sch} for a schematic view.
\end{example}
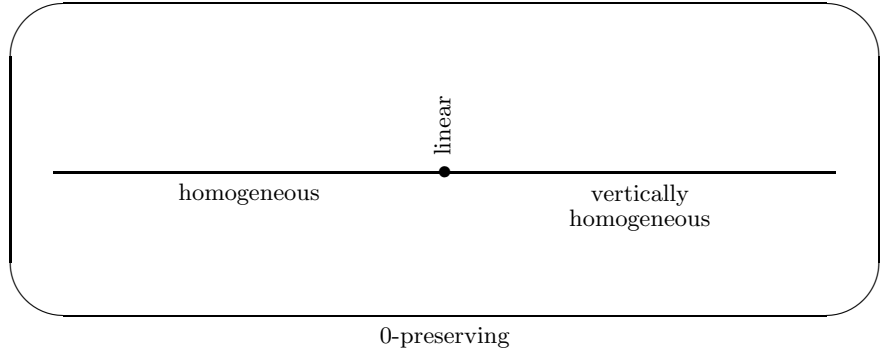
\begin{figure}
\begin{center}
\setlength{\unitlength}{.15em}
\begin{picture}(200,75)
\put(10,50){\line(1,0){180}}
\put(100,50){\makebox(0,0){$\bullet$}}
\put(55,45){\makebox(0,0){\footnotesize homogeneous}}
\put(145,42){\makebox(0,0){$\mbox{\footnotesize vertically}\atop
   \mbox{\footnotesize homogeneous}$}}
\put(97.5,54){\psrotate{\mbox{\footnotesize linear}}}
\put(100,53){\oval(200,72)}
\put(100,12){\makebox(0,0){\footnotesize 0-preserving}}
\end{picture}\end{center}
\caption{\small Each set of connections is closed with empty interior in
the next:  linear in homogeneous, linear in vertically homogeneous, linear
and homogeneous in 0-preserving, linear and vertically homogeneous in
0-preserving, linear and homogeneous and vertically homogeneous in
0-preserving, 0-preserving in the whole.  The strongly nonlinear
connections may be visualized as a 3-d cloud containing the 0-preserving
ones.}\label{sch}
\end{figure}

As usual, $\mathfrak{X}$ denotes the vector fields on $M$.  There is also
a natural vector bundle map $\cJ:\pi^*TM\to \V$ which is an isomorphism on
fibers, another version of canonical parallel translation on a vector
space.
\begin{theorem}\label{corresp}
There is a bijective correspondence between (possibly nonlinear)
connections $\H$ and our (possibly nonlinear) covariant derivatives $\D$
on $TM$.
\end{theorem}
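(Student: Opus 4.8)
The forward map is already at hand: Definition~\ref{cd} sends a connection $\H$ to the covariant derivative $\nabla^{\H}_{U}V=\kappa(V_{*}U)$ built from its connector. The plan is to show this assignment is a bijection onto the class of admissible covariant derivatives, and the structural fact I would lean on throughout is Proposition~\ref{kvbm}: on each $\pi_{T}$-fibre $T_{v}TM$ the connector $\kappa$ is \emph{linear}, its restriction to $\V_{v}$ is the canonical, connection-independent isomorphism $K\colon\V_{v}\to T_{p}M$ (with $p=\pi v$), and $H_{v}=\mathrm{id}-K^{-1}\kappa|_{T_{v}TM}$, so that $\H_{v}=\ker\kappa|_{T_{v}TM}$. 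In other words, a connection is exactly the datum, for each $v$, of a linear map $\kappa_{v}\colon T_{v}TM\to T_{p}M$ that extends the fixed map $K$ on $\V_{v}$; and since $\V_{v}$ has a complement and $\kappa_{v}$ is linear, that datum is pinned down by the vector space $\V_{v}$ together with the action of $\kappa_{v}$ on \emph{any one} complement of it.

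For injectivity, suppose $\nabla^{\H}=\nabla^{\H'}=\D$. Fix $p\in M$ and $v\in T_{p}M$, and choose a local section $V$ of $TM$ with $V_{p}=v$; then $V_{*p}\colon T_{p}M\to T_{v}TM$ is a linear injection whose image is a complement of $\V_{v}$. For every vector field $U$, tensoriality in the first slot gives $\D_{U}V|_{p}=\kappa(V_{*p}U_{p})$, so $\D$ reads off $\kappa$ on all of $\im V_{*p}$; combined with the a priori identity $\kappa|_{\V_{v}}=K$ and the linearity of $\kappa$ on $T_{v}TM=\V_{v}\oplus\im V_{*p}$, this determines $\kappa$ on the whole fibre, hence $H_{v}$ and $\H_{v}=\ker\kappa_{v}$. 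As $p,v$ were arbitrary, $\H=\H'$, and in fact the computation hands us a formula for the inverse map.

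For surjectivity I would take as the working description of ``our covariant derivatives'' the operators $\D$ that are (i) tensorial, hence pointwise, in $U$ and (ii) satisfy the $1$-jet difference relation $\D_{U}V|_{p}-\D_{U}V'|_{p}=K\bigl((V_{*p}-V'_{*p})U_{p}\bigr)$ whenever $V_{p}=V'_{p}$ --- a relation automatically valid for every $\nabla^{\H}$ because $(V_{*p}-V'_{*p})U_{p}$ is $\pi_{*}$-vertical and $\ker H=\V$, and which one checks is exactly what is needed below. Given such a $\D$, for each $v$ pick a local section $V$ through $v$ (depending smoothly on $v$ via a local trivialization) and define $\kappa_{v}$ on $T_{v}TM=\V_{v}\oplus\im V_{*p}$ to be $K$ on the first summand and $V_{*p}U_{p}\mapsto\D_{U}V|_{p}$ on the second, extended linearly. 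Property (i) makes the second clause well posed, and a short computation using (ii) shows $\kappa_{v}$ is independent of the choice of $V$. Since $K$ is injective, $\ker\kappa_{v}\cap\V_{v}=0$, so $\H_{v}:=\ker\kappa_{v}$ is a complement of $\V_{v}$ by dimension count; setting $H_{v}=\mathrm{id}-K^{-1}\kappa_{v}$ one verifies $H^{2}=H$ and $\ker H=\V$ directly from $\kappa_{v}|_{\V_{v}}=K$, and smoothness from the smooth choices made, so $\H\in\econ(M)$. By construction $\kappa_{v}$ is the connector of $\H$ in the sense of Definition~\ref{kap}, whence $\nabla^{\H}=\D$ on each $\im V_{*}$ and then everywhere by (ii). Running the two constructions in succession in either order returns the original object, by the injectivity computation, so the correspondence is bijective.

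The main obstacle, and the part deserving the most care, is fixing the precise axiomatic description of the target class so that the reconstruction is both well defined and connection-valued: one must confirm that the $1$-jet difference relation is neither too weak (it must force independence of the auxiliary section $V$, via the computation sketched above) nor too strong (every $\nabla^{\H}$ must satisfy it), that pointwise-in-$U$ dependence handles independence of $U$, and that the pointwise family $v\mapsto\H_{v}$ is genuinely a smooth subbundle rather than merely a set-theoretic assignment --- this last being where the local triviality of $TM$ and the smooth dependence of the chosen sections on $v$ enter. A secondary check, not needed for the bare statement but natural to include, is that the bijection restricts to the expected subcorrespondences (e.g.\ the vertically homogeneous connections of Definition~\ref{dvhc} matching the correspondingly homogeneous covariant derivatives identified in Example~\ref{hex}), which just amounts to pushing the homogeneity identity through the formula $H_{v}=\mathrm{id}-K^{-1}\kappa_{v}$.
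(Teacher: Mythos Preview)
Your injectivity argument---recovering $\H_v$ as $\ker\kappa_v$ from the values $\del{U}V|_p=\kappa(V_{*p}U_p)$---is exactly the paper's reconstruction, just phrased through the connector rather than by exhibiting horizontal vectors directly: the paper sets $\bar{\H}_u=\{U_*v-\cJ_u\del{v}U\mid U_p=u,\ v\in T_pM\}$, and since $\cJ$ inverts $K$ this is precisely $\{H_u(U_*v)\}=\ker\kappa|_{T_uTM}$. Where you diverge is in the treatment of surjectivity. The paper reads ``our covariant derivatives'' as those produced by Definition~\ref{cd}, so the target class is \emph{by definition} the image of $\H\mapsto\D$ and only injectivity carries content; the proof accordingly says ``it suffices to show that we can reconstruct $\H$ from its associated $\D$'' and stops once $\bar{\H}=\H$ is checked. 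Your axioms (i)--(ii) instead give an intrinsic characterization of that image, which buys a sharper statement (an operator is a covariant derivative in this sense iff it satisfies (i)--(ii)) at the cost of the extra verifications you flag in your final paragraph. Both routes are correct; the ``main obstacle'' you identify simply does not arise in the paper's framing, because there the target class is not independently specified.
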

\begin{proof}
It suffices to show that we can reconstruct $\H$ from its associated
covariant derivative $\D$.  For each $u\in T_p M$, define
$$ \pbar{\H}_u = \{U_*v - \cJ_u\del{v}U\mid U\in\mathfrak{X}, U_p = u, v\in
T_pM\} $$
and form the subbundle $\pbar\H$ in $TTM$ in the obvious way.  It is easy
to see that $\pbar\H$ is complementary to $\V$ as required, hence a
connection.  That $\pbar\H$ is smooth is straightforward.  Finally, $\pbar\H
= \H$ from this construction and the construction of $\D$ from $\H\!$
\cite{DRP4}.
\end{proof}
Compare \cite[p.\,77, proof of 2.58]{P}.  Thus as usual, we may refer
indifferently to $\H$ or its associated $\D$ as the connection.

Generalized connection coefficients may be introduced through
\begin{equation}
\left(KH_V V_*U\right)^k = \Gamma^k_i(V)U^i\,,\label{ours}
\end{equation}
making manifest the tensoriality in $U$. Here is an example of their
use. Observe that $(KV_* U)^k = U^i\partial_i V^k$ so that
\begin{equation}
\left(\del{U}V \right )^k =  U^i\partial_i V^k - \Gamma^k_i(V)U^i
\label{lccd}
\end{equation}
is the covariant derivative.

We find the usual relation between the two notions of geodesic.
\begin{theorem}\label{cg=sg}
A curve $c$ is a geodesic of\/ $\H$ if and only if\/ $\del{\dc}\,\dc = 0$.
\end{theorem}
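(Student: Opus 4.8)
The plan is to collapse both sides of the stated equivalence onto the single geometric condition that the canonical second lift $\ddc$ of $\dc$ to $TTM$ lies in the horizontal subbundle along $\dc$; that is, $\ddc\in\H_{\dc}$ at every parameter value. So I would show separately that ``$c$ is a geodesic of $\H$'' and ``$\del{\dc}\dc=0$'' are each equivalent to this condition.

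First I would unwind the left-hand side. By Theorem~\ref{cis} and the convention fixed just after it, ``$c$ is a geodesic of $\H$'' means ``$c$ is a geodesic of the associated geodesic quasispray $S$'', i.e.\ $\ddc = S(\dc)$. Now $S(v)=\pi_*\big|^{-1}_{\H_v}(v)$ is by construction the \emph{unique} element of $\H_v$ whose image under $\pi_*$ is $v$; and the canonical lift always satisfies $\pi_*(\ddc)=\dc$ (being a second lift, $\ddc$ is projectable and $(\pi\circ\dc)'=\dc$). Since $\H_v\cap\V_v=0$, the map $\pi_*\big|_{\H_v}$ is injective, so uniqueness yields $\ddc=S(\dc)$ iff $\ddc\in\H_{\dc}$. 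This settles the ``geodesic'' side.

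Next I would unwind the right-hand side using the connector. The key observation is that, for a vector field $V$ on $M$ with $V\circ c=\dc$ along the image of $c$, the pushforward $V_*\dc$ evaluated along $c$ is exactly $\ddc$: taking $c$ itself as the representing curve of the tangent vector $\dc(t)$, one gets $V_*\dc(t)=\frac{d}{ds}\big|_{s=t}V(c(s))=\frac{d}{ds}\big|_{s=t}\dc(s)=\ddc(t)$. Hence, by Definition~\ref{cd}, $\del{\dc}\dc=\kappa(\ddc)=K\big(\ddc - H_{\dc}\ddc\big)$. The element $\ddc-H_{\dc}\ddc$ is the vertical component of $\ddc$, and $K$ restricts to a fiberwise isomorphism $\V\to TM$; therefore $\del{\dc}\dc=0$ iff $\ddc-H_{\dc}\ddc=0$ iff $\ddc=H_{\dc}\ddc$ iff $\ddc\in\H_{\dc}$. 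Combining this with the previous paragraph proves the theorem. Equivalently, one may argue entirely in induced local coordinates: (\ref{lccd}) gives $(\del{\dc}\dc)^k=\ddc^k-\G^k_i(\dc)\dc^i$, while $\ddc=S(\dc)$ reads $\ddc^k=\S^k(c,\dc)$, so the statement reduces to the coordinate identity $\S^k(x,y)=\G^k_i((x,y))\,y^i$ for the geodesic quasispray, which is immediate from its definition together with (\ref{ours}).

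The step I expect to require the most care is the identity $\del{\dc}\dc=\kappa(\ddc)$ at parameter values where $\dc=0$, since there a vector field $V$ with $V\circ c=\dc$ on the image of $c$ need not exist and Definition~\ref{cd} is phrased for honest vector fields. The clean resolution is that $\del{\dc}\dc$ depends only on the $1$-jet of $\dc$ along $c$: tensoriality in the first slot together with (\ref{lccd}), whose right side sees $\dc$ only through $\dc^i\partial_iV^k=(V^k\circ c)'=\ddc^k$, shows that the coordinate formula $(\del{\dc}\dc)^k=\ddc^k-\G^k_i(\dc)\dc^i$ holds with no extendibility hypothesis. Moreover at a zero of $\dc$ we have $\del{\dc}\dc=\del{0}(\cdot)=0$ by Example~\ref{hex}, and $S(\dc)=S(0)=0$ because the geodesic quasispray vanishes on the $0$-section, so both conditions in the theorem hold vacuously there. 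The remaining ingredients --- uniqueness of the horizontal lift characterizing $S(v)$, and $K\big|_\V$ being an isomorphism (cf.\ the definition of $K$ and Proposition~\ref{kvbm}) --- are already in hand, so no further obstacle is anticipated.
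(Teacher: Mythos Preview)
Your argument is correct and uses the same ingredients as the paper's proof: the identification $\dc_*\dc=\ddc$, the definition $\del{\dc}\dc=\kappa(\ddc)=K(\ddc-H_{\dc}\ddc)$, the construction of $S$ in Theorem~\ref{cis}, and the fact that $K$ is a fiberwise isomorphism. The paper compresses this into one line by observing directly that $H_{\dc}\ddc=S(\dc)$ always (since both are the unique horizontal vector over $\dc$ projecting to $\dc$), yielding $\del{\dc}\dc=K(\ddc-S(\dc))$; your detour through the intermediate condition $\ddc\in\H_{\dc}$, the coordinate check, and the care at zeros of $\dc$ are sound but not needed.
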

\begin{proof}
$\del{\dc}\,\dc = \kappa(\dc_*\dc) = K(\dc_*\dc - H_{\dc}\,\dc_*\dc) =
K(\dc_*\dc - S(\dc))$ by the construction of $S$ in Theorem \ref{cis}.
Now all we have to do is identify $\dc_*\dc$ as $\ddc$ and recall that $K$
is an isomorphism on fibers.
\end{proof}
If we are given the geodesic equation of $\H$ in the form
\begin{equation}
\ddot{c}^{\,k} = \Gamma^k_i(\dot{c})\dot{c}^i ,
\label{lcgeq}
\end{equation}
then
\begin{equation}
\S^k(\dot{c}) = \Gamma^k_i(\dot{c})\dot{c}^i
\label{lcsg}
\end{equation}
gives the quasispray $S$ induced by the connection $\H$.  Using these
connection coefficients, we obtain the LC connection associated to $S$
by our extended APS construction; see also Theorem \ref{=geos}.

Curvature is readily handled.  Let $\H$ be a connection on $M$.  The {\em
horizontal lift\/} of a vector field $U$ on $M$ is defined as usual and
denoted by $\bU$.
\begin{definition}\label{curv}
Given vector fields $U$ and $V$ on $M$, the {\em curvature operator\/}
$R(U,V):TM\to TM$ is defined by
$$R(U,V)w = \kappa\left([\bV,\bU]_w\right)$$
for all $w\in TM$.  It is tensorial in the first two arguments, but
{\em nonlinear\/} (in general) in the third.
\end{definition}
The arguments are reversed on the right in order to obtain the usual formula
in terms of the associated covariant derivative,
$$R(U,V)W = \del{U}\del{V}W - \del{V}\del{U}W - \del{[U,V]}W\, ,$$
as one may verify readily.  It is also easy to check that this curvature
vanishes if and only if $\H$ is integrable, thus justifying our definition.

Torsion is considerably more obscure.  Consider two (possibly nonlinear)
connections $\pbar\H$ and $\H$ on $TM$ with corresponding (possibly
nonlinear) covariant derivatives $\pbar\D$ and $\D$.
\begin{definition}\label{dif}
Given two covariant derivatives $\pbar\D$ and $\D$, define the {\em
difference operator\/} $\cD = \pbar\D - \D$.
\end{definition}
We think of $\cD$ as having two arguments, $\cD(U,V) = \bdel{U}V -
\del{U}V$.  It is always tensorial in $U$, but is {\em nonlinear\/} (in
general) in $V$.

We define the {\em covariant differential\/} as usual {\em via\/} $(\D V)U
= \del{U}V$. As an operator, $\D V$ is still linear in its argument $U$.
\begin{lemma}\label{bar}
For all $v\in TM$, $\pbar{\H}_v = \{z - \cJ_v\cD(\pi_* z,v)\mid
z\in\H_v\}$.
\end{lemma}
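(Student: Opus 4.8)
The plan is to read off both $\H_v$ and $\bar\H_v$ from the reconstruction formula established inside the proof of Theorem~\ref{corresp}, and then to match them up through $\cD$. Fix $v\in TM$ and put $p=\pi(v)$. Before anything else I would check that the symbol $\cD(\pi_* z,v)$ makes sense, i.e.\ that although $\D$ and $\bar\D$ are each nonlinear in their second slot, the difference $\cD$ is pointwise there. Writing $\bar\kappa$ for the connector of $\bar\H$, we have $\cD(U,V)=\bdel{U}V-\del{U}V=(\bar\kappa-\kappa)(V_*U)$; and on $\V$ both $\kappa$ and $\bar\kappa$ agree with $K$, since $\ker H=\ker\bar H=\V$ (axiom C3) makes $H_v z$ and $\bar H_v z$ vanish for $z\in\V_v$. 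Hence the linear map $\bar\kappa_v-\kappa_v:T_vTM\to T_pM$ kills $\V_v$ and therefore factors uniquely through $\pi_*$, so $\cD(U,V)_p$ depends only on $U_p$ and $V_p$. In particular $\cD(\pi_* z,v)\in T_pM$ is a well-defined value and $\cJ_v\cD(\pi_* z,v)\in\V_v\subseteq T_vTM$.

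Next I would run the reconstruction formula in both directions. It gives $\bar\H_v=\{U_*w-\cJ_v\bdel{w}U\mid U\in\mathfrak{X},\ U_p=v,\ w\in T_pM\}$ and likewise $\H_v=\{U_*w-\cJ_v\del{w}U\mid\dots\}$. Given $\bar z\in\bar\H_v$, write $\bar z=U_*w-\cJ_v\bdel{w}U$ and set $z=U_*w-\cJ_v\del{w}U\in\H_v$. Since $\cJ_v\del{w}U\in\V_v=\ker\pi_*$ while $\pi_*U_*w=w$, we get $\pi_* z=w$; and because $U_p=v$, the first paragraph gives $\cD(w,U)=\cD(\pi_* z,v)$. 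Hence $\bar z=z-\cJ_v(\bdel{w}U-\del{w}U)=z-\cJ_v\cD(\pi_* z,v)$, which proves one inclusion. For the reverse inclusion, take any $z\in\H_v$, write $z=U_*w-\cJ_v\del{w}U$, observe $\pi_* z=w$, and read the same identity backwards: $z-\cJ_v\cD(\pi_* z,v)=U_*w-\cJ_v\bdel{w}U\in\bar\H_v$.

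I expect the only real subtlety to be the first step, confirming that $\cD(\pi_* z,v)$ is well defined, since that is exactly where the nonlinearity of $\D$ and $\bar\D$ in their second argument would otherwise obstruct the statement; the rest is a one-line manipulation of the reconstruction formula (add and subtract $\cJ_v\del{w}U$, and identify $w$ with $\pi_* z$). If a coordinate check is preferred, equation~(\ref{lccd}) shows $\cD(U,V)^k=(\Gamma^k_i(V)-\bar\Gamma^k_i(V))\,U^i$, making the dependence on $U_p,V_p$ alone transparent. A more conceptual variant would note that $\bar H_v|_{\H_v}$ is a linear isomorphism onto $\bar\H_v$ with $\bar H_v z=z-\cJ_v\cD(\pi_* z,v)$ for $z\in\H_v$; but that phrasing also requires the identity $\bar H_v(U_*w)=U_*w-\cJ_v\bdel{w}U$, i.e.\ that $\cJ_v$ inverts $K$ on $\V_v$, so I would stick with the direct argument above.
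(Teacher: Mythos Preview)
Your argument is correct. The well-definedness check in your first paragraph is exactly the point the paper leaves implicit when it silently passes from $\cD(u,V)$ to $\cD(u,v)$, and your justification via $\bar\kappa_v-\kappa_v$ vanishing on $\V_v$ (hence factoring through $\pi_*$) is the right one.

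Your route differs from the paper's in how each horizontal space is parametrized. You invoke the reconstruction formula of Theorem~\ref{corresp} symmetrically for both $\H_v$ and $\bar\H_v$, writing a generic element as $U_*w-\cJ_v\del{w}U$ (respectively with $\bar\D$), and then simply add and subtract $\cJ_v\del{w}U$ to pass between the two. The paper instead fixes a vector field $V$ with $V_p=v$ and $(\D V)_p=0$, so that $z=V_*u$ lands directly in $\H_v$ without any $\cJ$-correction; it then shows $z-\cJ_v\cD(u,v)\in\bar\H_v$ by computing $\bar\kappa$ of this element and getting zero, and finishes with the observation that $\pi_*$ is an isomorphism from each horizontal space onto $T_pM$, so the map $z\mapsto z-\cJ_v\cD(\pi_*z,v)$ is onto $\bar\H_v$. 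Your approach avoids appealing to the existence of such a $V$ and handles both inclusions by the same algebraic manipulation; the paper's approach is a touch shorter once that $V$ is in hand and makes the role of $\bar\kappa$ (as the defining equation $\bar\kappa=0$ for $\bar\H$) more visible. Either way the content is the same identity, and your version is perfectly acceptable.
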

\begin{proof}
Let $v\in T_p M$, $z\in\H_v$, $V\in\mathfrak{X}$ such that $(\D V)_p = 0$ and
$V_p = v$. Thus if $u = \pi_*z\in T_p M$, then $z=V_*u\in\H_v$. Now
$$ \bar\kappa V_*u = \bdel{u}V = \del{u}V + \cD(u,V) = \cD(u,V) =
\bar\kappa\cJ_v\cD(u,V)\,,$$
so $\bar\kappa\left( z - \cJ_v\cD(u,v)\right) = 0$ and $z - \cJ_v\cD(u,v)
\in \pbar{\H}_v$.

Since $\pi_*$ is an isomorphism of the horizontal spaces $\pbar\H_v$ and
$\H_v$ with $T_p M$ and $\pi_*z = \pi_*\left( z - \cJ_v\cD(u,v)\right)$,
this yields all of $\pbar\H_v$.
\end{proof}
Compare this next result with \cite[Prop.\ on p.\,99]{P}.
\begin{theorem}\label{=geos}
Two connections on $TM$ have the same geodesic quasispray if and only if
their associated difference operator is alternating (vanishes on the
diagonal of\/ $TM\ds TM$).
\end{theorem}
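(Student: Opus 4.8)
The plan is to read off both geodesic quasisprays directly from Theorem~\ref{cis} and Lemma~\ref{bar}, so that the two SODEs are exhibited as differing by an explicit vertical term built from $\cD$ on the diagonal of $TM\ds TM$.

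First I would recall that, by Theorem~\ref{cis}, the geodesic quasispray $S$ of $\H$ is the section $v\mapsto S(v)$ characterized by $S(v)\in\H_v$ and $\pi_*S(v)=v$, and that the geodesic quasispray $\bar S$ of $\bar\H$ is characterized in the same way by $\bar S(v)\in\bar\H_v$ and $\pi_*\bar S(v)=v$. Fixing $v\in T_pM$ and putting $z=S(v)$, we have $z\in\H_v$ and $\pi_*z=v$, so Lemma~\ref{bar} applied to this particular $z$ tells us that
$$z-\cJ_v\cD(\pi_*z,v)=S(v)-\cJ_v\cD(v,v)$$
lies in $\bar\H_v$.

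Next I would push this element back down to the base with $\pi_*$. Because $\cJ$ takes its values in the vertical bundle $\V=\ker(\pi_*:TTM\surj TM)$, the correction term is annihilated, so
$$\pi_*\bigl(S(v)-\cJ_v\cD(v,v)\bigr)=\pi_*S(v)=v.$$
Since $\pi_*$ restricts to a linear isomorphism of $\bar\H_v$ onto $T_pM$, the uniqueness in Theorem~\ref{cis} forces $\bar S(v)=S(v)-\cJ_v\cD(v,v)$, that is,
$$\bar S(v)-S(v)=-\cJ_v\cD(v,v)\qquad\text{for every }v\in TM.$$
As $\cJ$ is an isomorphism on fibers, the right-hand side vanishes for all $v$ exactly when $\cD(v,v)=0$ for all $v$, i.e.\ exactly when $\cD$ vanishes on the diagonal. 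Both implications of the theorem come out at once, so what remains is only bookkeeping.

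I do not anticipate a real obstacle: the computation is essentially forced by Lemma~\ref{bar}. The two points that deserve a sentence of justification are that $\cD$ may legitimately be evaluated at a single diagonal point $(v,v)$ --- true because $\cD$ is tensorial in its first slot and, by~(\ref{lccd}), its value depends only on the value of the second argument --- and that $\cJ$ really does land in $\V$, which is precisely what makes the $\pi_*$-projection step work. If one prefers a coordinate check, the conclusion also falls out of~(\ref{lcsg}) and~(\ref{lccd}): the quasispray of $\H$ has components $\S^k(v)=\Gamma^k_i(v)v^i$ and that of $\bar\H$ the components $\bar\Gamma^k_i(v)v^i$, while $\cD(u,v)^k=\bigl(\Gamma^k_i(v)-\bar\Gamma^k_i(v)\bigr)u^i$, so the two quasisprays agree if and only if $\bigl(\Gamma^k_i(v)-\bar\Gamma^k_i(v)\bigr)v^i=\cD(v,v)^k$ vanishes for all $v$.
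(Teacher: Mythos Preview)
Your argument is correct and is essentially the paper's own proof with the details filled in: the paper computes $\bar S_v = \pi_*|^{-1}_{\bar\H_v}(v) = \pi_*|^{-1}_{\H_v}(v) - \cJ_v\cD(v,v)$ directly from Lemma~\ref{bar} and concludes exactly as you do. Your added justifications (that $\cJ$ lands in $\V$, that $\cD$ is well defined on diagonal points, and the optional coordinate check via~(\ref{lccd}) and~(\ref{lcsg})) are helpful elaborations but do not change the route.
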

\begin{proof}
For each $v\in TM$, $S_v = \pi_*\big|^{-1}_{\H_v} (v)$ while $\pbar{S}_v
= \pi_*\big|^{-1}_{\pbar{\H}_v} (v) = \pi_*\big|^{-1}_{\H_v} (v) -
\cJ_v\cD(v,v)$. Therefore $\pbar{S} = S$ if and only if $\cD(v,v) = 0$ for
all $v\in TM$.
\end{proof}
For {\em linear\/} connections, $\cD$ is {\em bilinear\/} and alternating
is equivalent to antisymmetric (or, skewsymmetric).  In general, of
course, this does not hold.

The familiar formula for torsion $T(U,V) = \del{U}V - \del{V}U - [U,V]$ is
not linear (let alone tensorial) in either argument.  Thus the usual trick
to get a torsion-free linear connection, replacing $\D$ by $\pbar\D = \D -
\half T$, will not work for our nonlinear connections.  Indeed, $\pbar\D$
and $\D$ seem to have the same geodesics and $\pbar\D$ is formally
torsion-free, {\em but\/} the new $\pbar\D$ is {\em not\/} one of our
nonlinear covariant derivatives:  $\bdel{U}V$ is {\em not\/} tensorial in
$U$.

A replacement $\cT$ for torsion must also be alternating in order for it
to play the same role in general that torsion does for linear connections.
For then, given such a $\cT$, $\pbar\D = \D + \cT$ is another nonlinear
covariant derivative of our type with the same geodesics as $\D$; or,
with the same geodesic qspray as $\D$.

What we shall do is one of the classic mathematical gambits:  turn a
theorem into a definition.
\begin{definition}
We define the LC connections constructed in the proof of Theorem
\ref{cce} to be the {\em torsion-free\/} connections.
\label{torsfree}
\end{definition}
Equivalently, we are regarding the usual torsion formula as derived from
the difference operator (difference tensor in the linear case)
construction \cite[pp.\,99--100]{P}.  See also Poor
\cite[pp.\,101--102]{P} for the relation to the classic
Ambrose-Palais-Singer correspondence and compare to \cite[2.104]{P}.

Now we may construct the torsion of a (possibly nonlinear) connection $\H$
with corresponding (possibly nonlinear) covariant derivative $\D$.  By
Theorem \ref{cis}, $\H$ induces a (unique) quasispray $S$.  Use the proof
of Theorem \ref{cce} to construct the connection $\hat\H$ from $S$.  By
Theorem \ref{corresp} there is a unique covariant derivative $\hat\D$
corresponding to $\hat\H$.  Let $\cD = \D - \hat\D$ be the difference
operator, so $\hat\D = \D - \cD$ is torsion-free.
\begin{definition}
Using the preceding notations, the (generalized) {\em torsion\/} of $\D$
is defined by $\cT = 2\cD = 2\left(\D - \hat\D\right)$.
\label{tors}
\end{definition}
The factor of two here and the subtraction order make verification that
this reduces to classical torsion in the linear case immediate, and
preserves the traditional formula $\hat\D = \D - \half\cT$ for the
associated torsion-free connection. See Poor \cite[2.105]{P} for how this
fits into the classical APS correspondence.

\section{\heads Finsler spaces}\label{fs}
For the benefit of those readers not familiar with Finsler geometry, we
offer a few introductory and historical remarks.

Finsler spaces are manifolds whose tangent spaces carry a norm (rather
than an inner product; {\em cf.}\ Banach {\em vs.}\ Hilbert spaces) that
varies smoothly with the base point.  Although Riemann actually defined
such spaces in his 1854 {\em Habilitationsvortrag,} the modern name comes
from P.\ Finsler's thesis of 1918 in which he studied the variational
problem in regular metric spaces.

Geometric objects on a Finsler space depend not only on the base point but
also on the fiber component.  Classically, a Finsler metric is given by a
fundamental function $F$ which is continuous on $TM$, smooth and positive
on $TM-0$, and positively homogeneous of degree one in the fiber
component.  An orthogonal structure on the vertical bundle is defined by
the vertical Hessian of the square of the fundamental function.  A
differentiable manifold $M$ with a Finsler metric is called a Finsler
space.  One modern variation is to consider only a subset of $TM$ as the
domain of $F$, with appropriate changes to the rest of the definition.

We define the Finsler functions $L$, the {\em basic\/} function, and the
traditional $F$, the {\em fundamental\/} function, following two of the
seemingly overlooked but prescient papers of Beem \cite{B1,B2}.

We require $L$ to be $h(2)$ and note that it corresponds to $F^2$, but to
get pseudoRiemannian structures we must require only that $L$ be real
valued, {\em not\/} strictly positive, else we could not have spacelike,
timelike, and null geodesics, as first observed by Beem \cite{B1}.  We
also require that $L$ be continuous on $TM$ and smooth on $TM-0$,
following tradition.

Then we use $|L|^{\frac{1}{2}}$ as the correspondent to $F$; {\em e.g.,}
in the first variation formula ({\em viz.}\ \cite[Chapt.\,10]{O}) to
obtain non-null geodesics. We shall see later how to obtain the null
geodesics.

The {\em vertical Hessian}
\begin{equation}
g_{ij}(y) = \frac{1}{2}\frac{\partial^2}{\partial y^i \partial y^j}L(y)
\label{vH}
\end{equation}
is traditionally assumed positive definite, which perforce yields only
Riemannian entities, such as the traditional orthogonal structure on the
vertical bundle $\V(TM-0)$.  We shall merely assume it is nondegenerate,
allowing pseudoRiemannian entities.  Together with our relaxed condition
on $L$, this gives us pseudoFinsler (or indefinite Finsler) structures as
first defined by Beem around 1969 \cite{B1}.

The traditional geodesic coefficient is \cite{BCS}
$$G^i(y) = \frac{1}{2} g^{il}(y) \left[
\frac{\partial}{\partial x^k \partial y^l}L(y)y^k - \frac{\partial}
{\partial x^l}L(y) \right].$$
To be consistent with our conventions, we take the negative of this for
our geodesic coefficients,
\begin{equation}
\eG^i(x,y) = -G^i(x,y)
\label{gc}
\end{equation}
where we have restored the explicit $x$ and $y$ dependence.  These
components $\eG^i$ then make up a semispray function $\eG$ with
accompanying $h(1)$ geodesic semispray $G$.  In induced local
coordinates,
$$ G:(x,y) \mapsto (x,y,y,\eG(x,y))\,.$$
The traditional Finsler geodesic equations are
$$\ddc^{i} + G^{i}(\dc) = 0\,.$$
In our notation and conventions, this becomes
\begin{equation}
\ddc = G(\dc)\,.
\label{fge}
\end{equation}

The traditional nonlinear connection coefficients are
$$N^i_j = \frac{\partial}{\partial y^j}G^i.$$
Converting to our notation and formalism, we obtain the $vh(0)$ nonlinear
connection on $TM-0$ given locally by
\begin{equation}
\G^i_j(x,y) = \frac{\partial}{\partial y^j}\eG^i(x,y)\,.
\label{fcnc}
\end{equation}
In fact, this last equation holds in complete generality, as can be seen
easily from (\ref{lcsg}). We chose to take note of it here in recognition
of the historical context.

Once we have the (nonlinear) connection $\H$ determined by $\G$, we obtain
the associated (nonlinear) covariant derivative $\D$ from Definition
\ref{cd}; it is unique by Theorem \ref{corresp}.  Using this connection,
we may then recoup (Theorem \ref{cg=sg}) all the (timelike and spacelike)
geodesics found in Finsler geometry tradition {\em via\/} the First
Variation, and we also obtain all the null geodesics, which {\em cannot}
\cite[Chapt.\,10]{O} be so found.  Therefore, as first noted by Beem
\cite{B2}, we do indeed have genuine pseudoFinsler geometry.

\section{\heads Geodesic connectivity and stability}\label{gcs}

In \cite{DRP1}, we defined a SODE to be LD if and only if its usual
exponential map is a local diffeomorphism.  For some results there, we used
the fact that the geodesics of such SODEs give normal starlike
neighborhoods of each point in $M$.  (In fact, the $a$-curves also give
such neighborhoods, as is easily seen.) Thanks to our new exponential
maps (Section \ref{exp}), these results now immediately extend
to all SODEs.  For convenience, we state them here.
\begin{proposition}
Let $M$ be a manifold with a pseudoconvex and disprisoning SODE $S$. If\/
$S$ has no conjugate points, then $M$ is geodesically connected.
\label{p5}
\end{proposition}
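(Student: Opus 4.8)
The plan is to adapt the classical variational/topological argument for geodesic connectivity of complete Riemannian manifolds (Hopf--Rinow together with the Cartan--Hadamard-style use of the exponential map) to the present setting, replacing completeness by the pair of hypotheses ``pseudoconvex'' and ``disprisoning,'' and replacing the injectivity-of-$\exp$ ingredient by the no-conjugate-points hypothesis. First I would fix $p,q\in M$ and consider the set of $S$-geodesic segments from $p$ to $q$; the goal is to show it is nonempty. The natural device is a limiting argument: take a sequence of piecewise-$S$-geodesic (or merely continuous) paths from $p$ to $q$ and extract from it an actual $S$-geodesic. Here is where pseudoconvexity does its work — with $K = \{p,q\}$ it furnishes a compact $K'$ containing every $S$-geodesic segment with both endpoints in $K$, so any candidate minimizing sequence of $S$-geodesic segments stays in a fixed compact set, and by the flow/continuous-dependence theorems (as used already for the exponential maps in Section \ref{exp}) one can pass to a limit geodesic segment, provided the parameter lengths stay bounded.

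Next I would handle the boundedness of parameter lengths and the possibility that the limiting object degenerates. This is exactly the role of ``disprisoning'': an $S$-geodesic that spent an unbounded parameter interval inside the compact $K'$ would be an inextendible geodesic imprisoned in a compact set, contradicting the disprisoning hypothesis; hence the parameter lengths along a minimizing family are bounded, and the limit is a genuine (finite-parameter) $S$-geodesic from $p$ to $q$. To organize the minimization cleanly I would work at a fixed point $p$ and use the exponential map $\exp^{\ve}_p$ of Section \ref{exp}: for $0<|\ve|<\ve_p$ it is a local diffeomorphism near $0\in T_pM$ (Theorem in Section \ref{exp}), so its image is a neighborhood of $p$. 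The no-conjugate-points hypothesis is what upgrades this from a local to an essentially global statement — absence of conjugate points along $S$-geodesics means $\exp^{\ve}_p$ (suitably interpreted along radial pre-images of geodesics, as in the extended APS discussion) has no critical points, hence is a local diffeomorphism on its whole domain, so the set of points reachable from $p$ by $S$-geodesics is open. That it is also closed follows from the pseudoconvex/disprisoning limiting argument of the previous paragraph, and since $M$ is connected, the reachable set is all of $M$, i.e. $M$ is geodesically connected.

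The main obstacle I expect is the last gluing step: showing the set of $S$-geodesically-reachable points is simultaneously open and closed, because $S$ is only a SODE and not the spray of a metric, so there is no length functional to minimize and no Hopf--Rinow theorem to invoke directly. One cannot simply take a ``shortest'' geodesic; instead one must argue with the topology of the space of $S$-geodesic segments with endpoints in a compact set, using that this space is compact (pseudoconvex $+$ disprisoning give the needed precompactness and the bound on parameter length) and that the endpoint map is open (no conjugate points $\Rightarrow$ $\exp^{\ve}_p$ is a submersion, in fact a local diffeomorphism, along radial geodesic pre-images). Closedness of the image then follows from compactness of the domain and continuity of the endpoint map. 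A secondary technical point is keeping careful track of the distinction, emphasized in Section \ref{exp}, between radial lines in $T_pM$ and actual geodesics: the openness argument must be run with the correct pre-images of geodesics under $\exp^{\ve}_p$, exactly the ``considerable extra care'' flagged in the sketch following Theorem \ref{cce}. Once these two facts — openness of the endpoint map via no conjugate points, and closedness of the reachable set via pseudoconvexity and disprisoning — are in hand, connectedness of $M$ finishes the proof.
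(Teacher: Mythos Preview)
Your open/closed connectedness argument---the reachable set from $p$ is open because absence of conjugate points makes $\exp^\ve_p$ a local diffeomorphism on its whole domain, and closed because pseudoconvexity confines limiting geodesics to a compact set while disprisoning bounds their parameter lengths---is exactly the approach the paper has in mind. Note, however, that the paper does not reprove this proposition here: it simply observes that the argument given in \cite{DRP1} (itself adapted from \cite{BP6}) goes through verbatim once one has the new exponential maps of Section~\ref{exp}, which supply locally diffeomorphic exponentials for \emph{all} SODEs rather than only homogeneous ones; the normal starlike neighborhoods those maps produce are the sole missing ingredient.

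One small cleanup: your first paragraph's talk of ``minimizing sequences'' and extracting a geodesic from piecewise or continuous paths is a detour---as you yourself note, there is no length functional here. The argument you settle on in the final paragraph (take $q_n\to q$ with $S$-geodesics $c_n$ from $p$ to $q_n$, use pseudoconvexity to trap the $c_n$ in a compact set, use disprisoning to bound the parameters, pass to a convergent subsequence via continuous dependence of solutions on initial data) is the correct closedness argument and needs no minimization scaffolding. Also, the caution about radial lines versus geodesics is well placed but not really an obstacle for \emph{this} proposition: openness only needs that $\exp^\ve_p$ is a local diffeomorphism and that every point near $p$ is hit by \emph{some} $S$-geodesic from $p$, which follows directly from the lemma in Section~\ref{exp} that $\ve$ is a geodesic parameter.
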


Let $M$ be a manifold with a SODE $S$ and let $\widetilde {M}$ be a
covering manifold.  If $\phi:\widetilde{M}\rightarrow M$ is the covering
map, then it is a local diffeomorphism.  Thus $\tilde{S} =
(\phi_{\ast})^{\ast}S$ is the unique SODE on $\widetilde{M}$ which covers
$S$, geodesics of $\tilde{S}$ project to geodesics of $S$, and geodesics of
$S$ lift to geodesics of $\tilde{S}$.  Also, $S$ has no conjugate points if
and only if $\tilde{S}$ has none.  The fundamental group is simpler, and
$\tilde{S}$ may be both pseudoconvex and disprisoning even if $S$ is
neither.
\begin{corollary}
Let $M$ be a manifold with a pseudoconvex and disprisoning SODE $S$ and let
$\widetilde{M}$ be a covering manifold with covering SODE $\tilde{S}$.  If
$\tilde S$ has no conjugate points, then both $\widetilde M$ and $M$ are
geodesically connected.  \label{qcov}
\end{corollary}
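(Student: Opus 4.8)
The plan is to handle the two conclusions separately. For $M$ there is nothing new: by the observations just before the statement, $\tilde S$ has no conjugate points if and only if $S$ does, so under the hypotheses $S$ is pseudoconvex, disprisoning, and free of conjugate points, and Proposition~\ref{p5} applies directly to $(M,S)$ to give that $M$ is geodesically connected.

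All the content lies in the claim about $\widetilde M$, and here I expect the main obstacle to be that the naive route --- carrying the hypotheses of Proposition~\ref{p5} up to $\widetilde M$ and reapplying it --- does not quite work. Write $\phi\colon\widetilde M\to M$ for the covering map. Disprisoning does transfer: $\phi$ maps compacta to compacta, so an inextendible $\tilde S$-geodesic imprisoned in a compact $\tilde K\subseteq\widetilde M$ would project to an inextendible $S$-geodesic imprisoned in $\phi(\tilde K)$, contrary to disprisoning of $S$. Pseudoconvexity need not transfer, though: given compact $\tilde K\subseteq\widetilde M$, pseudoconvexity of $S$ yields a compact $K'\subseteq M$ containing every $S$-geodesic segment with endpoints in $\phi(\tilde K)$, but $\phi^{-1}(K')$ is typically noncompact for an infinite cover, so nothing so far keeps the matching $\tilde S$-geodesic segments in a compact part of $\widetilde M$ (consistent with the remark before the statement, which treats pseudoconvexity of $\tilde S$ as something that may or may not hold). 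So a different argument is needed on $\widetilde M$.

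I would argue through the exponential maps of Section~\ref{exp}. The key input --- which I would take from the \emph{proof} of Proposition~\ref{p5}, not just its statement --- is that for a pseudoconvex, disprisoning, conjugate-point-free $S$ the geodesics issuing from any $p\in M$ assemble, after the reparametrizations of Section~\ref{exp}, into a covering map $E_p\colon T_pM\to M$ whose restriction to each ray from $0$ is the $S$-geodesic from $p$ with that initial velocity; since $T_pM$ is simply connected, $E_p$ is then the universal covering of $M$. Now fix $\tilde p\in\widetilde M$ and set $p=\phi(\tilde p)$. As $T_pM$ is simply connected, $E_p$ lifts through $\phi$ to a map $\psi\colon T_pM\to\widetilde M$ with $\phi\circ\psi=E_p$ and $\psi(0)=\tilde p$; sitting over the covering $E_p$ with simply connected domain, $\psi$ is itself a covering map of $\widetilde M$, in particular surjective. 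For $v\in T_pM$, the curve $t\mapsto\psi(tv)$ is the $\phi$-lift starting at $\tilde p$ of the $S$-geodesic $t\mapsto E_p(tv)$; since $\tilde S$ is the covering SODE, this lift is the $\tilde S$-geodesic from $\tilde p$ with initial velocity $(\phi_*|_{\tilde p})^{-1}v$, so evaluating at $t=1$ shows $\psi=\exp_{\tilde p}\circ(\phi_*|_{\tilde p})^{-1}$. Surjectivity of $\psi$ now forces every point of $\widetilde M$ onto an $\tilde S$-geodesic from $\tilde p$; since $\tilde p$ was arbitrary, $\widetilde M$ is geodesically connected.

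Two things I would want to check with care. First, the precise form of the developing map $E_p$ when $S$ is inhomogeneous: no single $\exp^\ve_p$ is globally defined, so the covering-map input really concerns the whole family of Section~\ref{exp}, or equivalently the map obtained by developing full inextendible geodesics, and I would need the proof of Proposition~\ref{p5} to yield that conclusion in a form uniform in $p$. Second, that $t\mapsto\psi(tv)$ is a genuine $\tilde S$-geodesic and not merely a reparametrization of one: for a general SODE reparametrization destroys the geodesic property, so this step really uses that $\tilde S=(\phi_*)^{*}S$, whence the unique $\phi$-lift of an $S$-geodesic is an $\tilde S$-geodesic with the projected initial data. Incidentally the argument does not consume the hypothesis that $\tilde S$ has no conjugate points --- $\exp_{\tilde p}$ emerges as the composite of the local diffeomorphism $\psi$ with a linear isomorphism, so it is automatically a local diffeomorphism --- which is just the stated equivalence; what is really used is only that $S$ has no conjugate points.
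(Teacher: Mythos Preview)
The paper gives no proof of this corollary: along with Proposition~\ref{p5} and the theorem that follows, it is simply listed as a result from \cite{DRP1} that now extends to all SODEs via the new exponential maps of Section~\ref{exp}. Your argument for $M$ is exactly the one the surrounding text implies --- pass the no-conjugate-points hypothesis from $\tilde S$ to $S$ using the stated equivalence, then invoke Proposition~\ref{p5} directly --- so on that half there is nothing to compare.

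For $\widetilde M$ you go well beyond anything the paper provides. You correctly flag that pseudoconvexity need not lift to a cover (the paper itself hints at this just before the corollary, noting that $\tilde S$ ``may be both pseudoconvex and disprisoning even if $S$ is neither''), so Proposition~\ref{p5} cannot simply be reapplied upstairs, and you supply an alternative route by lifting a developing map $E_p\colon T_pM\to M$ through $\phi$. That line of argument is sound in outline, and your two caveats --- the precise global form of $E_p$ when $S$ is inhomogeneous, and the verification that $\phi$-lifts of $S$-geodesics are genuine $\tilde S$-geodesics rather than reparametrizations --- are exactly the points a full write-up would have to nail down. Since the paper offers no proof to compare against here, your approach stands on its own as a legitimate and substantially more careful strategy than the bare citation the paper gives.
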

\begin{theorem}
Let $S$ be a pseudoconvex and disprisoning SODE on $M$.  If $S$ has no
conjugate points, then for each $p \in M$ the exponential maps of $S$ at $p$
are diffeomorphisms.
\end{theorem}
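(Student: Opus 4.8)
The plan is to combine two facts: (1) by the earlier theorem in Section~\ref{exp}, each $\exp^\ve_p$ with $0<|\ve|<\ve_p$ is a \emph{local} diffeomorphism near $0\in T_pM$, and (2) the hypotheses of pseudoconvexity, disprisonment, and absence of conjugate points upgrade this to a \emph{global} diffeomorphism onto $M$, by essentially the Hadamard--Cartan argument adapted to our setting. So the proof divides into a surjectivity part and an injectivity part, with the local-diffeomorphism property already in hand from Section~\ref{exp} and the no-conjugate-points hypothesis ensuring $(\exp^\ve_p)_*$ is an isomorphism at \emph{every} point of its domain, not just near $0$.

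First I would fix $p\in M$ and a value $\ve$ with $0<|\ve|<\ve_p$, and recall that the no-conjugate-points assumption means precisely that the new Jacobi fields (the ones along the geodesics, mentioned after Figure~\ref{j2}) are nonvanishing, hence $(\exp^\ve_p)_*$ is nonsingular on all of $\dom(\exp^\ve_p)$; thus $\exp^\ve_p$ is a local diffeomorphism everywhere it is defined. Next, for surjectivity, I would argue that under pseudoconvexity and disprisonment every point $q\in M$ is joined to $p$ by an $S$-geodesic: this is exactly the content of Proposition~\ref{p5} (geodesic connectivity), so every $q$ lies on a geodesic from $p$, and after reparametrizing by the geodesic parameter $\ve$ (using the Lemma that $\ve$ is a geodesic parameter) we get $q=\exp^{\ve(q)}_p(v)$ for suitable $v$; a standard connectedness/openness argument then lets us absorb this into a single $\exp^\ve_p$ by rescaling $v$, since the $a$-parameter curves and geodesic reparametrizations let us trade $\ve$ for the length of $v$. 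For injectivity, I would suppose $\exp^\ve_p(v_1)=\exp^\ve_p(v_2)=q$ and consider the two geodesic segments from $p$ to $q$; pseudoconvexity confines any homotopy of such segments to a compact set, disprisonment prevents degeneration, and the absence of conjugate points prevents the differential from ever dropping rank, so the usual covering-space / monodromy argument (as in the Hadamard theorem) forces $v_1=v_2$. Finally, a local diffeomorphism that is a continuous bijection is a diffeomorphism.

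The main obstacle I anticipate is the surjectivity/injectivity bookkeeping across different values of $\ve$: unlike the classical case, $\exp^\ve_p$ is \emph{not} defined on all of $T_pM$ for a fixed $\ve$, and the domain shrinks as $|\ve|$ grows (Proposition~\ref{exp0}), while the $a$-parameter curves are \emph{not} geodesics (Figures~\ref{j1}, \ref{j2}). So I cannot simply run the Hadamard argument verbatim; I must be careful to reparametrize each geodesic so that its terminal point is hit at parameter exactly $\ve$, i.e.\ use the correct \emph{pre-images} of geodesics rather than radial lines, exactly the extra care flagged in the sketch after Theorem~\ref{cce}. Concretely, the fix is: given $q$ reached along a geodesic $c$ with $\dot c(0)=w$ at parameter $t_0$, the geodesic $s\mapsto c(\,s t_0/\ve\,)$ has the form of an $S$-geodesic only if $S$ is homogeneous, so instead one observes $q=\exp^{t_0}_p(w)$ and then uses that the family $\{\exp^{t}_p\}$ sweeps out all of $M$ as $t$ and $w$ vary, together with the flow being smooth in $\ve$, to conclude the fixed-$\ve$ map is onto its (open, by local-diffeo) and closed (by pseudoconvexity) image, hence all of the connected $M$. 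Once surjectivity is pinned down this way, injectivity follows as above, and the theorem is proved.
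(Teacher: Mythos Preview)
The paper does not give a proof of this theorem at all: the sentence immediately preceding Proposition~\ref{p5} says ``Thanks to our new exponential maps (Section~\ref{exp}), these results now immediately extend to all SODEs.  For convenience, we state them here.''  In other words, the paper's argument is purely by citation: the result was proved in \cite{DRP1} for SODEs whose exponential map is a local diffeomorphism, and Section~\ref{exp} shows that \emph{every} SODE has such exponential maps $\exp^\ve_p$, so the old proof applies verbatim.  Your proposal, by contrast, attempts to reconstruct the underlying Hadamard--Cartan style argument from scratch.  That is a legitimate and more informative route, and your outline (local diffeomorphism everywhere from no conjugate points, surjectivity from geodesic connectivity, injectivity from a covering-map argument) is the correct architecture.

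That said, there is a genuine gap in your surjectivity step, and you have already put your finger on it without closing it.  For a \emph{fixed} $\ve$, you cannot trade parameter $t_0$ for initial velocity by rescaling unless $S$ is homogeneous; you acknowledge this, but your proposed repair---that the image of $\exp^\ve_p$ is open (clear) and closed ``by pseudoconvexity''---is not justified.  Pseudoconvexity controls geodesic \emph{segments} with both endpoints in a given compact set; it does not, on its face, tell you that a limit of points of the form $\exp^\ve_p(v_k)$ is again of that form, because the $v_k$ need not stay bounded in $T_pM$ and the domain of $\exp^\ve_p$ need not be all of $T_pM$.  The actual mechanism (as in \cite{BP6} and \cite{DRP1}) is rather that pseudoconvexity plus disprisonment together give a limit-curve/properness statement for the geodesic flow, from which one extracts that $\exp^\ve_p$ is a covering map onto $M$; surjectivity and injectivity then fall out simultaneously from simple connectivity of the (starlike) domain.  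Your ``open and closed'' shortcut skips exactly this properness step, and your injectivity paragraph also leans on the covering-map conclusion without establishing it.  If you want a self-contained proof rather than the paper's citation, that properness argument is the piece you need to supply.
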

We remark that none of these results require (geodesic) completeness of
the SODE $S$.

We now consider the joint stability of pseudoconvexity and disprisonment
for SODEs in the fine topology.  Because each linear connection determines
a (quadratic) spray, Examples 2.1 and 2.2 of \cite{BP4} show that neither
condition is separately stable.  (Although \cite{BP4} is written in terms
of principal symbols of pseudodifferential operators, the cited examples
are actually metric tensors).  We shall obtain $C^{0}$-fine stability,
rather than $C^{1}$-fine stability as in \cite {BP4}, due to our effective
shift from potentials to fields as the basic objects.  The proof requires
some modifications of that in \cite {BP4}; we shall concentrate on the
changes here and refer to \cite {BP4} for an outline and additional
details.

Rather than considering $r$-jets of functions, we now take $r$-jets of
sections in defining the Whitney or $C^r$-fine topology as in Section 2 of
\cite{BP4}.  Let $h$ be an auxiliary complete Riemannian metric on $M$.
Thus we look at the $C^r$-fine topology on the sections of $TTM$ over $TM$.

If $\g_1$ and $\g_2$ are two integral curves of a SODE $S$ with $\g_1(0) =
(x,v)$ and $\g_2(0) = (x,\l v)$ for some positive constant $\l$, then the
inextendible geodesics $\p\circ\g_1$ and $\p\circ\g_2$ no longer differ only
by a reparametrization.  Thus, in contrast to \cite{BP4}, we must now
consider an integral curve for each non-zero tangent vector at each point of
$M$.  Note this also means that we can no longer use the $h$-unit sphere
bundle to obtain compact sets in $TM$ covering compact sets in $M$.

Observe that the equations of geodesics involve no derivatives of $S$.
Thus if $\g :[0,a]\rightarrow TM$ is a fixed integral curve of $S$ in $TM$
with $\g(0) = v_0 \in TM$ and if $\g' :[0,a]\rightarrow TM$ is an integral
curve of $S'$ in $TM$ with $\g'(0) = v$, then $d_h \left( \p\circ\g(t),
\p\circ\g'(t)\right) < 1$ for $0\le t\le a$ provided that $v$ is
sufficiently close to $v_0$ and $S'$ is sufficiently close to $S$ in the
$C^0$-fine topology.  This and the $\sigma$-compactness of $T K_1$ when
$K_1$ is compact yield the following result.
\begin{lemma}
Assume $K_{1}$ is a compact set contained in the interior of the compact
set $K_{2}$, $V$ is an open neighborhood of $K_2$, $S$ is a disprisoning
SODE, and let $\epsilon > 0$.  There exist countable sets $\{v_i\}
\subseteq TK_{1}$ of tangent vectors and $\{\delta_{i}\}$ and $\{a_{i}\}$
of positive constants such that if $S'$ is in a $C^{0}$-fine
$\epsilon$-neighborhood of $S$ over $V$, then the following hold:
\begin{enumerate}
\item if $c$ is an inextendible $S$-geodesic with $c(0)$ in a
$\delta_{i}$-neighborhood of $v_{i}$, then $c[0,a_{i}] \subset V$ and $c
(a_{i}) \in V-K_{2}$;

\item If $c^{\prime}$ is an inextendible $S^{\prime}$-geodesic with
$\dot{c}^{\prime}(0)$ in a $\delta _{i}$-neighborhood if $v_{i}$, then
$c^{\prime}[0,a_{i}] \subset V $ and $c^{\prime}(a_{i}) \in V-K_{2}$;

\item Two inextendible geodesics, $c$ of $S$ and $c^{\prime}$ of
$S^{\prime}$ with $\dot{c}(0)$ and $\dot{c}'(0)$ in a $\delta
_{i}$-neighborhood of $v_{i}$, remain uniformly close together for $0\leq t
\leq a_{i}$;

\item The union of all the $\delta_{i}$-neighborhoods of the $v_{i}$
covers $TK_{1}$.\eop
\end{enumerate}
\end{lemma}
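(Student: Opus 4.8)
The plan is to combine disprisonment with continuous dependence of solutions on both the initial velocity and the equation, and then to replace the finite subcover used in \cite{BP4} by a countable one --- the step at which the nonhomogeneity of $S$ forces the countable families $\{v_i\}$, $\{\delta_i\}$, $\{a_i\}$ to appear. First I would fix a nonzero $v\in TK_1$ and let $c_v$ be the inextendible $S$-geodesic with $\dc_v(0)=v$. Since $c_v(0)=\pi(v)\in K_1$, which lies in the interior of $K_2$, the curve starts strictly inside $K_2$; and since $S$ is disprisoning, $c_v$ cannot remain in the compact set $K_2$, so it meets $M-K_2$ at some forward parameter value. Because $K_2\subseteq V$ and $V$ is open, $c_v$ cannot leave $V$ before it leaves $K_2$, so there is a parameter $a_v>0$ in the domain of $c_v$ with $c_v[0,a_v]\subset V$ and $c_v(a_v)\in V-K_2$: one takes the first parameter at which $c_v$ has already left $K_2$ while still lying in $V$, which is well defined, and $[0,a_v]$ then lies in the domain by construction.

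Next I would invoke the flow theorems (\emph{e.g.,} \cite[pp.\,175, 302]{HS}). Since $c_v[0,a_v]$ is compact and sits inside the open set $V$, and --- as noted just before the lemma --- the geodesic equations involve no derivatives of $S$, there are a radius $\delta_v>0$ and a $C^0$-fine neighborhood $\mcal{N}_v$ of $S$ over $V$ such that, for every $S'\in\mcal{N}_v$ and every $w$ within $\delta_v$ of $v$, both the $S$-geodesic and the $S'$-geodesic with initial velocity $w$ are defined on $[0,a_v]$ (by the escape lemma, each stays in a fixed compact subset of $V$), remain in $V$ there, reach $V-K_2$ at parameter $a_v$ (because $V-K_2$ is open and contains $c_v(a_v)$), and stay uniformly $h$-close to $c_v$, hence to one another, on $[0,a_v]$. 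With $v$, $\delta_v$, $a_v$ playing the roles of $v_i$, $\delta_i$, $a_i$, this is precisely conclusions (1)--(3) for all such $S'$.

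The balls $B(v,\delta_v)$ now cover $TK_1$ (here I restrict attention to nonzero vectors, the zero section contributing only constant geodesics). Since $TK_1$ is $\sigma$-compact, hence Lindel\"of, I would extract a countable subcover $\{B(v_i,\delta_i)\}$ --- this is conclusion (4) --- and put $a_i=a_{v_i}$, $\mcal{N}_i=\mcal{N}_{v_i}$. It then remains only to exhibit a single $C^0$-fine $\epsilon$-neighborhood of $S$ over $V$ contained in $\bigcap_i\mcal{N}_i$. For this I would pass to a locally finite countable refinement of the cover and, using paracompactness of $V$ together with the fact that a $C^0$-fine neighborhood is cut out by a continuous positive function on $V$, patch the countably many local tolerances into a single such function dominated by $\epsilon$.

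I expect this last step to be the main obstacle, and it is exactly the promised modification of \cite{BP4}: there one works over the \emph{compact} unit sphere bundle, so finitely many balls suffice, whereas here the geodesics of $v$ and $\l v$ genuinely differ --- they are no longer reparametrizations of one another --- so only $\sigma$-compactness of $TK_1$ is available and the $v_i$, $\delta_i$, $a_i$ must be countable; checking with care that their countably many constraints still assemble into a single $C^0$-fine neighborhood is the delicate point. Everything else is the familiar combination of disprisonment with continuous dependence of solutions on the initial conditions and on the equation.
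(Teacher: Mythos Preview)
Your proposal is correct and follows essentially the same approach as the paper: the paper's entire proof is the short paragraph preceding the lemma, which invokes continuous dependence of solutions on initial data and on the equation (hence $C^0$ rather than $C^1$ closeness suffices) together with the $\sigma$-compactness of $TK_1$, exactly as you do. Your write-up simply supplies the details the paper omits---the use of disprisonment to produce each $a_v$, the Lindel\"of extraction of a countable subcover, and the observation that nonhomogeneity forces countable rather than finite families---and your flagged ``patching'' concern is a fair reading of a point the paper leaves implicit.
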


Continuing to follow \cite{BP4}, we construct the increasing sequence of
compact sets $\{ A_n\}$ which exhausts $M$ and the monotonically
nonincreasing sequence of positive constants $\{ \epsilon_n\}$.  The only
additional changes from \cite[p.\,17f\,]{BP4} are to use integral curves of
$S$ in $TM$ instead of bicharacteristic strips in $T^*M$.  No other
additional changes are required for the proof of the next result either.
\begin{lemma}
Let $S$ be a pseudoconvex and disprisoning SODE and let $S^{\prime}$ be
$\delta$-near to $S$ on $M$.  If $c^{\prime}:(a,b)\rightarrow M$ is an
inextendible $S'$-geodesic, then there do not exist values $a < t_{1} <
t_{2} < t_{3} < b$ with $c^{\prime} (t_{1}) \in A_{n}$, $c ^{\prime} (t_{3})
\in A_{n}$, and $c^{\prime} (t_{2}) \in A_{n+4} - A_{n+3}$.\eop \label{3.2}
\end{lemma}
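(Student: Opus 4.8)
The plan is to argue by contradiction, following the proof of the corresponding assertion in \cite[pp.\,17ff\,]{BP4} with the one systematic substitution of integral curves of $S$ in $TM$ for the bicharacteristic strips in $T^{*}M$ used there, and of the countable covering of $TA_{k}$ furnished by the preceding lemma for the compact $h$-unit cosphere bundle used there. Here ``$\delta$-near'' is read in the sense of the $C^{0}$-fine neighbourhood of $S$ cut out by the sequence $\{\epsilon_{n}\}$ on the exhaustion $\{A_{n}\}$. So assume that for some such $S'$ and some inextendible $S'$-geodesic $c'$ there do exist $a<t_{1}<t_{2}<t_{3}<b$ with $c'(t_{1}),c'(t_{3})\in A_{n}$ and $c'(t_{2})\in A_{n+4}-A_{n+3}$. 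The target is to promote this ``round trip'' of $c'$ to a round trip of a genuine $S$-geodesic and then invoke the pseudoconvexity of $S$: the $A_{k}$ are arranged (in the construction of \cite{BP4}) so that, among other properties, every $S$-geodesic segment with both endpoints in $A_{n}$ (or in a small enlargement of it) lies in $A_{n+1}$; exhibiting an $S$-geodesic segment with both endpoints in such an enlargement of $A_{n}$ but containing a point outside $A_{n+3}$ is the contradiction we want.

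First I would recall the construction, already used above, of $\{A_{n}\}$ and of the nonincreasing $\{\epsilon_{n}\}$: $\{A_{n}\}$ a compact exhaustion with the absorption property just stated, and $\epsilon_{n}$ chosen small enough that conclusions (1)--(4) of the preceding lemma hold simultaneously for the finitely many applications at the levels $K_{1}=A_{k}$, $K_{2}=A_{k+1}$, $V=\mathrm{int}\,A_{k+2}$ with $n\le k\le n+3$. Because only these finitely many levels, and hence only finitely many of the (countably many) base velocities $v_{i}$ at each level, can be relevant to an excursion that reaches no farther than $A_{n+4}$, the recursive choice of $\{\epsilon_{n}\}$ stays well posed even though each $TA_{k}$ is merely $\sigma$-compact rather than compact.

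Next I would run the shell-crossing bookkeeping. On $c'|[t_{1},t_{2}]$ the geodesic leaves $A_{n}$ and penetrates past $A_{n+3}$, hence crosses each shell $A_{k+1}-A_{k}$ for $n\le k\le n+2$, and on $c'|[t_{2},t_{3}]$ it crosses them again on the way back to $A_{n}$; crucially, the disprisoning of $S$ enters through the constants $a_{i}$ of the preceding lemma, which bound the parameter length an $S'$-geodesic (being $\epsilon$-close to $S$) can dwell in a given compact set before being forced out, so that the total parameter length $t_{3}-t_{1}$ of the round trip is controlled in terms of $n$. At each crossing, the velocity of $c'$ at its last visit to $A_{k}$ lies in $TA_{k}$, which conclusion (4) covers by $\delta_{i}$-neighbourhoods; for the relevant $i$, conclusions (1), (2), (3) produce an $S$-geodesic whose initial data matches that of $c'$ to within $\delta_{i}$, which stays uniformly close to $c'$ on $[0,a_{i}]$, and which crosses into $A_{k+2}$ in lockstep with $c'$. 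Chaining these finitely many comparisons across the finitely many shell crossings out and back, exactly as in \cite[pp.\,17ff\,]{BP4}, yields an inextendible $S$-geodesic $c$ shadowing $c'$ over the whole of $[0,t_{3}-t_{1}]$ (reparametrized so $t_{1}\mapsto 0$). Then $c(0)=c'(t_{1})$ and $c(t_{3}-t_{1})$ lie in a small enlargement of $A_{n}$, while $c(t_{2}-t_{1})$ is so near $c'(t_{2})\in A_{n+4}-A_{n+3}$ that $c(t_{2}-t_{1})\notin A_{n+3}\supseteq A_{n+1}$; this contradicts pseudoconvexity of $S$, and the lemma follows.

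I expect the main obstacle to be exactly the one flagged in the text preceding the lemma: the loss of the compact (co)sphere bundle means a single comparison geodesic no longer suffices, so one must verify that the countable covering of each $TA_{k}$ in the preceding lemma can be fed into the shell-by-shell argument with only finitely many of its members relevant for each fixed $n$ (keeping $\{\epsilon_{n}\}$ well defined), and that the uniform-closeness estimate of conclusion (3)---which lives only on the intervals $[0,a_{i}]$ whose lengths are controlled by disprisoning---chains across those finitely many stages without deterioration. Once this is checked, the rest is a transcription of the bookkeeping of \cite{BP4} with integral curves in $TM$ replacing bicharacteristic strips in $T^{*}M$, and no further changes are needed.
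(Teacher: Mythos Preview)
Your proposal is correct and takes essentially the same approach as the paper, which simply states that the proof follows \cite[p.\,17f\,]{BP4} with integral curves of $S$ in $TM$ replacing bicharacteristic strips in $T^{*}M$ and no other additional changes. Your sketch fills in more of the shell-crossing bookkeeping than the paper does, but the strategy---contradiction via shadowing $c'$ by an $S$-geodesic using the preceding lemma and then invoking pseudoconvexity of $S$---is identical.
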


Now we establish the stability of pseudoconvex and disprisoning
SODEs by showing that the set of all SODEs in $\sode(M)$ which
are pseudoconvex and disprisoning is an open set in the $C^0$-fine
topology.  The only changes needed from the proof of Theorem 3.3 in
\cite[p.\,19]{BP4} are replacing principal symbols by SODEs,
bicharacteristic strips by integral curves, $S^*A_n$ by $TA_n$, and
references to Lemma 3.2 there by references to Lemma~\ref{3.2} here.
\begin{theorem}
If $S\in\sode(M)$ is pseudoconvex and disprisoning, then there is some
$C^{0}$-fine neighborhood $W(S)$ in $\sode(M)$ such that each $S' \in
W(S)$ is both pseudoconvex and disprisoning.\eop \label{spd}
\end{theorem}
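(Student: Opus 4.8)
The plan is to follow the proof of Theorem 3.3 of \cite{BP4} {\em mutatis mutandis,} exactly along the lines already flagged: principal symbols become SODEs, bicharacteristic strips become integral curves in $TM$, $S^*A_n$ becomes $TA_n$, and the structural Lemma 3.2 of \cite{BP4} is replaced by Lemma \ref{3.2}. Concretely, using the increasing exhaustion $\{A_n\}$ of $M$ by compact sets and the monotonically nonincreasing sequence of positive constants $\{\epsilon_n\}$ already constructed, I would let $W(S)$ be the basic $C^0$-fine neighborhood of $S$ in $\sode(M)$ determined by requiring $S'$ to be $\epsilon_n$-near $S$ over $A_n$ for every $n$ (the tolerances chosen so that membership in $W(S)$ automatically supplies the closeness hypotheses of the two lemmas). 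It then remains only to verify that every $S'\in W(S)$ is pseudoconvex and, separately, disprisoning.

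For \emph{disprisonment}, I would argue by contradiction: if some inextendible $S'$-geodesic $c'$ lay in a compact set, then it would lie in $A_n$ for some $n$, since $\{A_n\}$ exhausts $M$. Apply the Lemma immediately preceding Lemma \ref{3.2} with $K_1=A_n$, $K_2=A_{n+1}$, a suitable open neighborhood $V$ of $A_{n+1}$, and $\epsilon$ matched to the relevant $\epsilon_m$, so that $S'\in W(S)$ meets its hypothesis. This produces countable data $\{v_i\}\subseteq TA_n$, $\{\delta_i\}$, $\{a_i\}$ whose $\delta_i$-neighborhoods cover all of $TA_n$ --- here the $\sigma$-compactness of $TA_n$ is what replaces the (now unavailable) $h$-unit sphere bundle. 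Since $\dot c'(0)\in T_{c'(0)}M\subseteq TA_n$, it lies in the $\delta_i$-neighborhood of some $v_i$, and conclusion (2) of that Lemma forces $c'(a_i)\in V-K_2$; in particular $c'$ leaves $A_{n+1}\supseteq A_n$, contradicting that it is trapped in $A_n$.

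For \emph{pseudoconvexity}, given a compact $K\subseteq M$ I would choose $n$ with $K\subseteq A_n$ and claim that $K'=A_{n+4}$ works. If some $S'$-geodesic segment had both endpoints in $K\subseteq A_n$ yet left $A_{n+4}$, then after extending it to an inextendible $S'$-geodesic $c':(a,b)\to M$, a routine intermediate-value argument (using $A_n\subseteq A_{n+3}\subseteq\mathrm{int}\,A_{n+4}$ from the exhaustion) would produce parameters $a<t_1<t_2<t_3<b$ with $c'(t_1),c'(t_3)\in A_n$ and $c'(t_2)\in A_{n+4}-A_{n+3}$. Since $S'\in W(S)$ is $\delta$-near $S$ in the sense required by Lemma \ref{3.2}, this directly contradicts Lemma \ref{3.2}. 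Hence every such segment stays inside $A_{n+4}$, so $K'=A_{n+4}$ witnesses pseudoconvexity of $S'$; as $S'\in W(S)$ was arbitrary, the theorem follows.

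The real difficulty is not in this final deduction but in the two lemmas it invokes, and the main obstacle is precisely the one the section has been set up to overcome: because a SODE $S'$ near $S$ need not have its geodesics be reparametrizations of $S$-geodesics, one genuinely must control an integral curve for \emph{each} nonzero tangent vector, so compactness of a unit sphere bundle is unavailable and must be replaced by $\sigma$-compactness of $TK$ together with a careful interleaving of the covering data $\{v_i\},\{\delta_i\},\{a_i\}$ with the exhaustion $\{A_n\}$ and the shrinking tolerances $\{\epsilon_n\}$. Once that bookkeeping is in place --- as it is in the cited construction and in Lemma \ref{3.2} --- the improvement to $C^0$-fine (rather than $C^1$-fine) stability is automatic, since the geodesic equations involve no derivatives of $S$.
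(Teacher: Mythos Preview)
Your proposal is correct and follows precisely the approach the paper takes: the paper's own proof consists solely of the remark that one need only replace principal symbols by SODEs, bicharacteristic strips by integral curves, $S^*A_n$ by $TA_n$, and Lemma 3.2 of \cite{BP4} by Lemma~\ref{3.2}, exactly as you state in your first paragraph. Your subsequent elaboration (deriving disprisonment from conclusion (2) of the first lemma and pseudoconvexity from Lemma~\ref{3.2} with $K'=A_{n+4}$) is a faithful unpacking of the argument in \cite{BP4} and goes beyond what the paper itself spells out.
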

\begin{corollary}
If $M$ is a pseudoconvex and disprisoning pseudoRiemannian manifold, then
all (possibly nonlinear) connections on $M$ which are sufficiently close
to the Levi-Civita connection are also pseudoconvex and disprisoning.\eop
\end{corollary}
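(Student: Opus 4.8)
The plan is to reduce the statement to Theorem~\ref{spd} by way of the connection-to-quasispray map of Section~\ref{cs}. First I would note that a pseudoRiemannian manifold being pseudoconvex and disprisoning means exactly that its metric geodesics satisfy these two properties; since the metric geodesics are the geodesics of the Levi-Civita connection, and by Theorem~\ref{cis} together with the convention following it these are the geodesics of the associated geodesic quasispray $S_0$ (a quadratic spray, as the connection is linear), the hypothesis says precisely that $S_0\in\qsp(M)\subseteq\sode(M)$ is pseudoconvex and disprisoning. Theorem~\ref{spd} then furnishes a $C^{0}$-fine neighborhood $W(S_0)$ in $\sode(M)$, every member of which is pseudoconvex and disprisoning.

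Next I would pull this neighborhood back to the space of connections. By Theorem~\ref{cis} every connection $\H$ on $M$ determines a geodesic quasispray $S_{\H}$, whose geodesics are, by definition, the geodesics of $\H$ (equivalently, by Theorem~\ref{cg=sg}, the curves $c$ with $\del{\dc}\,\dc=0$). Hence it suffices to exhibit a neighborhood of the Levi-Civita connection in $\econ(M)$ mapping into $W(S_0)$ under $\H\mapsto S_{\H}$. That such a neighborhood exists is essentially the continuity of the trivial fibration $\econ(M)\to\qsp(M)$ of Remark~\ref{c/q}: concretely, the local formula (\ref{lcsg}), $\S^k(\dc)=\Gamma^k_i(\dc)\,\dc^{i}$, shows that in induced coordinates $S_{\H}$ depends algebraically---indeed fiberwise-linearly---on the generalized connection coefficients $\Gamma^k_i$, so $C^{0}$-fine closeness of connections (uniform closeness on compacta of $TM$ of the horizontal distributions, {\em i.e.,} of the $\Gamma^k_i$) forces $C^{0}$-fine closeness of the quasisprays. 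Composing, the preimage of $W(S_0)$ is the required neighborhood, and every connection in it is pseudoconvex and disprisoning because its geodesics coincide with those of its quasispray.

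The step requiring the most care is this last one: one must fix a precise topology on $\econ(M)$---a connection being a section of the affine bundle $G_H(TTM)$, the natural choice is the $C^{0}$-fine topology on such sections---and verify that under it the assignment $\H\mapsto S_{\H}$ is $C^{0}$-fine continuous into $\sode(M)$. This is routine given the local description $S:(x,y)\mapsto(x,y,y,\S(x,y))$ and the fact, already exploited in the proof of Lemma~\ref{3.2}, that the geodesic equations involve no derivatives of the coefficients; but it is exactly the point where the shift ``from potentials to fields'' mentioned before Theorem~\ref{spd} is what yields $C^{0}$-fine rather than $C^{1}$-fine stability. As remarked after Theorem~\ref{spd}, no completeness of the Levi-Civita connection is needed anywhere.
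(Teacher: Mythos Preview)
Your proposal is correct and is exactly the argument the paper has in mind: the corollary is stated with no proof (just \eop) because it is meant to be an immediate consequence of Theorem~\ref{spd} via the connection-to-quasispray map of Theorem~\ref{cis} and Remark~\ref{c/q}, which is precisely what you spell out. One small slip: in your parenthetical you describe $C^{0}$-fine closeness as ``uniform closeness on compacta,'' but that is the compact-open topology; the Whitney $C^{0}$-fine topology is strictly finer on noncompact $TM$, and it is the latter you need (and otherwise use correctly) to invoke Theorem~\ref{spd}.
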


\end{document}